\documentclass[11pt]{article}
\usepackage[utf8]{inputenc}
\usepackage[T1]{fontenc}
\usepackage[english]{babel}
\usepackage{amssymb}
\usepackage{amsmath}
\usepackage{amsthm}
\usepackage{amsfonts}
\usepackage{esint}
\usepackage{dsfont}
\usepackage{wasysym}
\usepackage{bbm}
\usepackage[all]{xy}
\usepackage{hyperref}
\usepackage{caption}
\usepackage{enumitem}
\usepackage{graphicx}
\usepackage{tikz-cd}
\usepackage{tkz-tab}
\usetikzlibrary{decorations.markings,decorations.pathreplacing,positioning,cd}
\usepackage{graphicx}
\usepackage{subcaption}

\usepackage{stmaryrd}

\newtheorem{theorem}{Theorem}[section]
\newtheorem{corollary}[theorem]{Corollary}
\newtheorem{assumption}[theorem]{Assumption}
\newtheorem{lemma}[theorem]{Lemma}
\newtheorem{claim}[theorem]{Fact}
\theoremstyle{definition}
\newtheorem{definition}[theorem]{Definition}

\newtheorem{proposition}[theorem]{Proposition}

\theoremstyle{remark}
\newtheorem{remark}[theorem]{Remark}

\newcommand{\R}{\mathbb{R}}
\newcommand{\Z}{\mathbb{Z}}
\newcommand{\N}{\mathbb{N}}

\newcommand{\Proba}[1]{\mathbb{P}\left( #1 \right)}

%\renewcommand*\half{.5}

%\excludeversion{hints}

\bibliographystyle{amsplain}

\title{Shadow and percolation I: discrete landscapes with independence}

\author{David Vernotte}

\begin{document}
\maketitle

\begin{abstract}
    Let $X : \Z^2 \to \R$ be a random field which we interpret as a random height function describing some landscape of mountains. We consider a source of light (a sun) located at infinity in a direction parallel with an axis of $\Z^2$ and emitting rays which are all parallel and make a slope $\ell\in \R$ with the horizontal plane. Given the value of $\ell\in \R$ some mountains of the landscape will be lit by the sun and other will be in the shadow of some higher mountain. Under some assumptions on $X$, including an independence assumption, we prove that this model may present two distincts phases depending on $\ell$. When the slope $\ell>0$ is very small, almost surely there exists an unbounded cluster of points in the shadow. However, if $\ell$ is big enough, almost surely there exists an unbounded cluster of points lit by the sun. We reformulate this problem in terms of percolation of a field $\alpha : \Z^2 \to \overline{\R}$ which has a simple definition but does not present many of the nice properties usually found in percolation models such as the FKG inequality, invariance by rotation or finite range correlations.
\end{abstract}

\tableofcontents

\section{Introduction}

\begin{figure}[htbp]
  \centering
  % Second row
  \begin{subfigure}{0.45\textwidth}
    \centering
    \includegraphics[width=\linewidth]{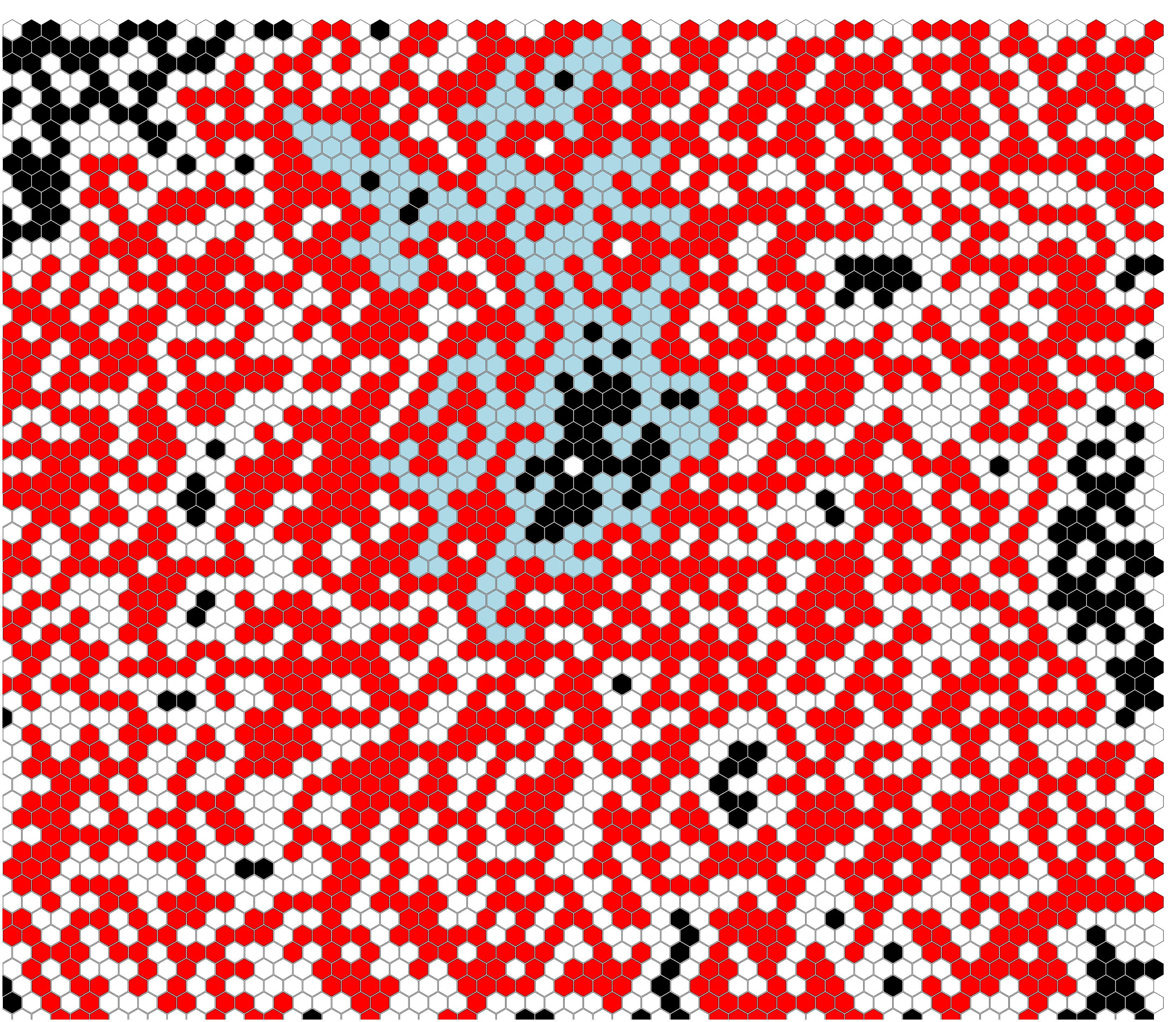}
    \caption{$\ell=0.4$}
    \label{fig:sub3}
  \end{subfigure}\hfill
  \begin{subfigure}{0.45\textwidth}
    \centering
    \includegraphics[width=\linewidth]{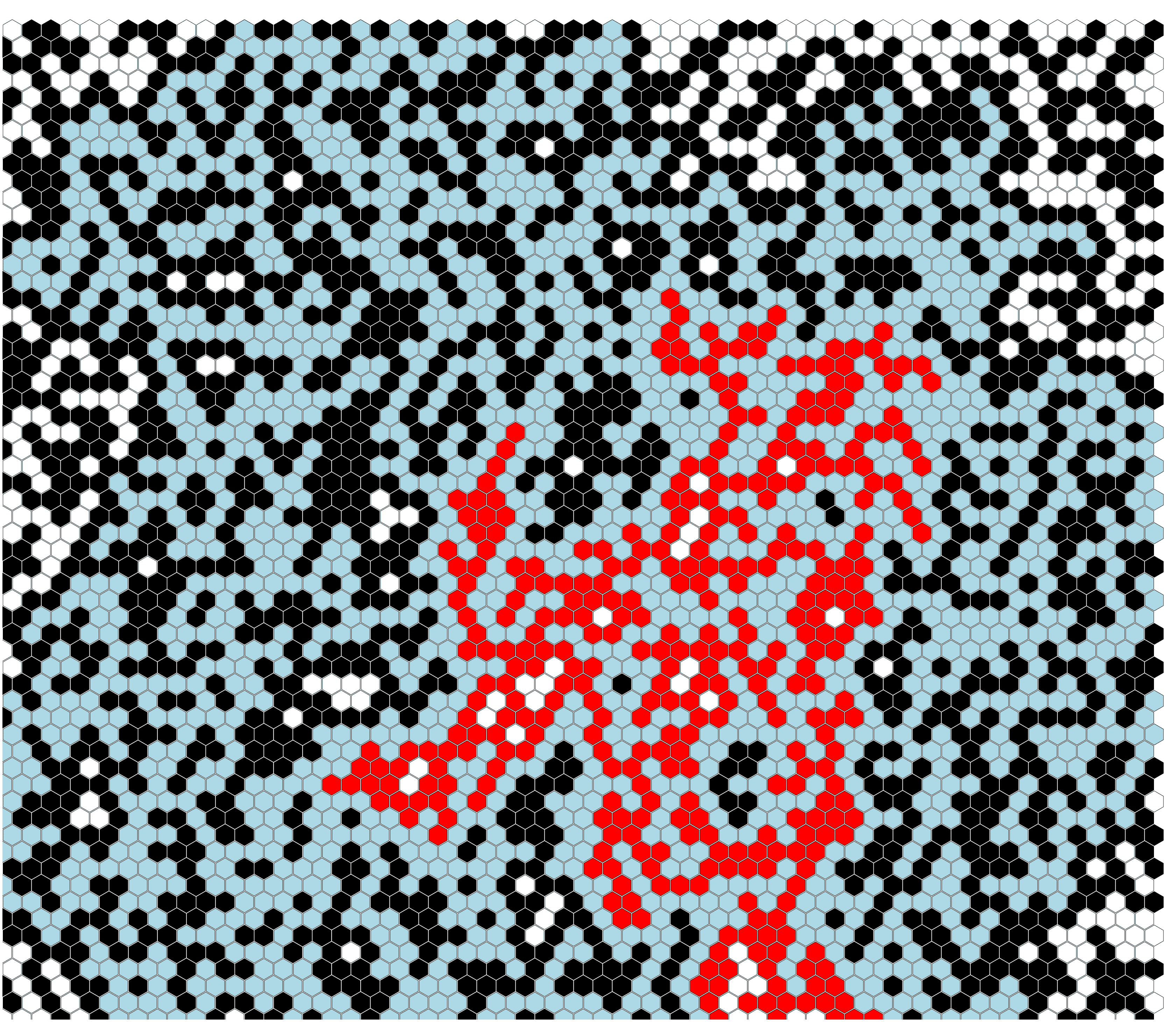}
    \caption{$\ell=0.5$}
    \label{fig:sub4}
  \end{subfigure}

  \caption{Illustration of a configuration $\alpha$ on the hexagonal lattice when the entries of $X$ are independent standard Gaussian random variables. In black, $\alpha>\ell$. In white, $\alpha<\ell$. In red, the biggest component of $\alpha>\ell$. In blue the biggest component of $\alpha<\ell$.}
  \label{fig:4panel}
\end{figure}

In this paper we introduce a new percolation model. Consider a planar discrete field $X : \Z^2 \to \R.$ One may interpret this field as a height function, on each point $u$ of the discrete plane $\Z^2$ there is a mountain of height $X(u).$ Consider a sun located at infinity in the direction $e_1=(1,0).$ We suppose that the sun emits rays that are all parallel and that make a slope $\ell\in \mathbb{R}$ with respect to the horizontal plane. Given this source of light, observers located at the top of each mountain of our landscape may see the sun or be in the shadow of some other mountain located between them and the sun.

Formally, we say that an observer located at $u\in \Z^2$ can see the sun of slope $\ell\in \mathbb{R}$ if and only if
\begin{equation}
    \label{a4:eq:cond_light}
    \forall r\in \mathbb{N}^*,\ X(u)+r\ell \geq X(u+re_1).
\end{equation}

It appears that this problem is related to percolation. Indeed, let $\alpha : \Z^2 \to \overline{\R}$ be the field defined as follows:
\begin{equation}
    \label{a4:eq:def_alpha_general}
    \forall u\in \Z^2,\ \alpha(u):= \sup_{r\in \mathbb{N}^*}\frac{X(u+re_1)-X(u)}{r} \in \overline{\R}
\end{equation}
A key observation is that for any $\ell\in \R$
\begin{equation}
\label{a4:eq:equiv_alpha}
    \alpha(u)\leq \ell \Leftrightarrow \forall r\in \mathbb{N}^*,\ X(u)+r\ell \geq X(u+re_1)
\end{equation}
Therefore, understanding who among the observers see the sun at slope $\ell$ is equivalent to understanding the \textit{excursion set} of $\alpha$, that is
\begin{equation}
    \label{a4:eq:alpha_excursion}
    \{\alpha \leq \ell\} := \{u\in \Z^2\ |\ \alpha(u)\leq \ell\}.
\end{equation}
This is a reformulation of our problem in terms of percolation.
Indeed, in standard percolation theory one takes a domain $S$ (for instance $S=\Z^2$ in our case) and sample a field $f : S \to \R$ according a certain law of probability and then study the properties of the random subsets $\{f\leq \ell\}$ when $\ell$ ranges over $\R$. There exist a variety of ways of sampling $f$ each giving its own percolation model. The most famous model is certainly the Bernoulli percolation model on edges of $\Z^2$. This model was heavily studied (see \cite{SW78}, \cite{Rus78}, \cite{Kes80}, \cite{Gri99} for a far from complete list of references). In this model each edge $e$ of $\Z^2$ is open with some probability $p$ (and $f(e)=1$) and is closed otherwise and $f(e)=0$), all edges being independent from one another. From this model we can obtain several variants, indeed one can play in various directions: replace the graph $\Z^2$ by another graph or lattice, consider site percolation instead of edge percolation, introduce some correlations between edges (there are again various ways to do that). It is also possible to do percolation in the continuum. A first natural example it Poisson percolation, or Poisson Voronoï percolation (see for instance \cite{Ben98}, \cite{BR06}, \cite{Tas16}). Another example in the continuum is the study of the level sets of Gaussian fields (see for instance \cite{BG16},\cite{RV19}, \cite{RV20}, \cite{MV20}, \cite{Sev21}). Although most of these models largely differ from one another, there are still some similarities between some of them.

\paragraph{Symmetries of the domain} The domain $S$ on which the percolation models are defined present natural symmetries. For instance the lattice $\Z^2$ has both horizontal and vertical symmetries and is invariant by rotation of $\pi/2$. The other lattices that are frequently found in the literature are also rich in symmetries (for instance the hexagonal lattice or the union jack lattice). In the continuum the set $\R^d$ also present such symmetries.

\paragraph{Symmetries of the random field} Most of the models used in percolation require the random field $f$ to have a law that preserve certain symmetries of the domain. For instance, in Bernoulli percolation it is clear that if one rotates by $\pi/2$ a random configuration one obtain another configuration which has the same law as the original one.

\paragraph{Stationarity} It is usually the case that the law of the random function $f$ is required to be stationary. This means that if one look at the restriction of the function $f$ in a subset $S_1 \subset S$ or a translated version $S_2$ of $S_1$, then these two restrictions $f_{|S_1}$ and $f_{|S_2}$ must have the same law (of course this implies that the set $S$ admits a good notion of translation, which is the case for $\Z^2$, $\R^d$ etc...)

\paragraph{Independence} Some of the usual models in percolation have an independence property meaning that if two points are far enough from one another then the values of $f$ at these two points are independent.

\paragraph{Conditioning} In the case where we lack the independence property, some models still envoy the fact that conditioning the random field $f$ on some of its values is well behaved. This is the case for Bernoulli percolation (since all entries of $f$ are independent) but also when $f$ is a correlated Gaussian field for instance.

\paragraph{FKG property} This property is more technical. It basically states that two increasing events (events that are favored by an increase of the random field $f$) must be positively correlated. This FKG property is a crucial tool that can be used in various arguments. Note that this property is verified for Bernoulli percolation but also for the percolation of Gaussian fields which have a correlation matrix with positive entries.
\\
\\
In the following we will consider a field $X$ which has the following assumption.
\begin{assumption}
\label{a4:a:a1}
There exists a probability space $(\Omega, \mathcal{A}, \mathbb{P})$ on which is defined a collection $(X(u))_{u\in \Z^2}$ of real random variables which are independent and identically distributed according to $\mu$, a probability distribution on $\R$.
\end{assumption}
In particular, Assumption \ref{a4:a:a1} implies that $X$ verifies all the above nice properties used in percolation. However, most of these properties do not transfer to the field $\alpha$ defined by \eqref{a4:eq:def_alpha_general}. In fact, under Assumption \ref{a4:a:a1}, the field $\alpha$ is stationary but other than this property not much is left. Indeed, among the symmetries of $\Z^2$ that preserve the law of $\alpha$ only the reflection along the horizontal axis remains. The law of the field $\alpha$ will not be invariant by rotation. The field $\alpha$ has long range correlations and does not behave well under conditioning. Moreover, numerical experiments suggest that the field $\alpha$ does not have the FKG property.

In this paper, we study this field $\alpha$ and show that although it lacks many of the nice usual properties used in percolation, it still presents a phase transition when $\ell$ varies. More precisely, under some assumptions of the law $\mu$ of Assumption \ref{a4:a:a1}, we show that when $\ell>0$ is small enough almost surely there exists a unique unbounded connected component of points in the shadow (that is, in $\{\alpha \geq \ell\}$). We also show that when $\ell>0$ is big enough, then almost surely there exists a unique unbounded connected component of points lit by the sun (that is, in $\{\alpha \leq \ell\}$).

We now present our assumptions on the law $\mu$.
\begin{assumption}
\label{a4:a:a2}
The law $\mu$ admits a density $\varphi \in L^1(\R)$ with respect to the Lebesgue measure on $\R,$ that is
$$\forall a\leq b,\ \mu([a,b])=\int_a^b \varphi(x)dx.$$
\end{assumption}
\begin{assumption}
\label{a4:a:a3}
There exist constants $c,C>0$ such that
\begin{equation}
    \forall x\in \R_+,\ \mu(\R\setminus [-x,x])\leq Ce^{-cx}
\end{equation}
\end{assumption}
Our main result concerns site percolation on $\Z^2$ for the field $\alpha$.
\begin{theorem}
\label{a4:thm:principal}
Let $X : \Z^2 \to \R$ be such that Assumption \ref{a4:a:a1} holds and let $\alpha$ be defined as in \eqref{a4:eq:def_alpha_general}. We denote by $\mu$ the law of $X(0)$. In the context of site percolation on $\Z^2$ the following holds.
\begin{itemize}
    \item If $\mu$ satisfies Assumption \ref{a4:a:a2}, there exists $\ell_1>0$ such that, if $\ell<\ell_1$ then,
    \begin{itemize}
        \item almost surely, the set $\{\alpha\leq \ell\}$ does not present an infinite cluster,
        \item almost surely, the set $\{\alpha \geq\ell\}$ presents an infinite cluster.
    \end{itemize}
    \item If $\mu$ satisfies Assumption \ref{a4:a:a3}, there exists $\ell_2<\infty$ such that, if $\ell>\ell_2$ then,
    \begin{itemize}
        \item almost surely, the set $\{\alpha\leq \ell\}$ presents an infinite cluster,
        \item almost surely, the set $\{\alpha \geq\ell\}$ does not present an infinite cluster.
    \end{itemize}
\end{itemize}
\end{theorem}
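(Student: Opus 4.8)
The plan is to reduce both assertions, using monotonicity and planar duality, to one type of statement — \emph{a suitable sparse random subset of $\Z^2$ has no infinite $*$-connected cluster} — and then to prove that by a Peierls argument built on the one genuinely good feature of $\alpha$: its restrictions to distinct horizontal lines $\Z\times\{k\}$ are \emph{independent}, since $\alpha(u)$ depends only on $\bigl(X(u+re_1)\bigr)_{r\ge 0}$.

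\textbf{Reductions.} For fixed $\ell$, "$\{\alpha\le\ell\}$ has an infinite cluster" and "$\{\alpha\ge\ell\}$ has no infinite cluster" are both increasing in $\ell$ (adding sites cannot destroy a cluster; removing sites cannot create one), so it suffices to establish the first bullet at one small value $\ell_1$ and the second at one large value $\ell_2$; moreover one may take $\ell_1,\ell_2$ to be continuity points of the law of $\alpha(0)$, of which all but countably many are, so that $\{\alpha\le\ell\}=\{\alpha<\ell\}$ and $\{\alpha\ge\ell\}=\{\alpha>\ell\}$ almost surely. Next I invoke the standard topological fact that for every $A\subseteq\Z^2$ either $A$ has an infinite nearest-neighbour cluster or $\Z^2\setminus A$ has an infinite $*$-cluster. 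Applied with $A=\{\alpha\ge\ell\}$ it shows that it is enough, in each regime, to prove that \emph{one} set has no infinite $*$-cluster: for small $\ell$ the set $\{\alpha\le\ell\}$ (this gives "$\{\alpha\le\ell\}$ has no infinite cluster" a fortiori, and via the dichotomy "$\{\alpha\ge\ell\}$ has an infinite cluster"); for large $\ell$ the set $\{\alpha\ge\ell\}$, symmetrically. Finally, "there is an infinite cluster" is translation invariant and $\alpha$ is a factor of the i.i.d.\ field $X$, hence this event has probability $0$ or $1$; equivalently, since $\Z^2$ is countable it is enough to bound $\mathbb P\bigl(u\in\text{an infinite $*$-cluster}\bigr)$ for a single $u$.

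\textbf{Finite-range truncation and per-line estimates.} Put $\alpha_R(u):=\max_{1\le r\le R}\frac{X(u+re_1)-X(u)}{r}\le\alpha(u)$; then $\bigl(\mathbbm 1[\alpha_R(u)\le\ell]\bigr)_u$ is $R$-dependent, being measurable with respect to $X$ on the segment $u+\{0,\dots,R\}e_1$. Two short computations are the engine of the proof: Assumption~\ref{a4:a:a2} gives $\mathbb P(\alpha_R(0)\le\ell)\to\frac1{R+1}$ as $\ell\downarrow0$ (order statistics of i.i.d.\ continuous variables), hence $p(\ell):=\mathbb P(\alpha(0)\le\ell)\to0$; and Assumption~\ref{a4:a:a3} gives $\mathbb P(\alpha(0)>\ell)\le\sum_{r\ge1}2Ce^{-cr\ell/2}\to0$ as $\ell\to\infty$, with the tail $\sum_{r>R}$ negligible. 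On a line, with $Z_i:=X(i,\cdot)-i\ell$, one has $i\in\{\alpha\le\ell\}\iff Z_i=\sup_{j\ge i}Z_j$: the lit sites are exactly the running maxima ("records") of $Z$. This yields the two estimates I need. For large $\ell$: the probability that a length-$m$ interval of a line is entirely in $\{\alpha\ge\ell\}$ is at most $Ce^{-c'm\ell}$ — either a single value far to the right shadows the whole interval, which forces it to exceed $\sim m\ell$ and costs $e^{-\Omega(m\ell)}$ by Assumption~\ref{a4:a:a3}, or every site is shadowed at bounded range, which also costs $e^{-\Omega(m\ell)}$. For small $\ell$: the probability that a prescribed set $x_1<\dots<x_L$ on a line lies in $\{\alpha\le\ell\}$ is at most $p(\ell)\prod_i\beta\bigl((x_{i+1}-x_i)\ell\bigr)$, where $\beta(t):=\mathbb P\bigl(X(2e_1)-X(e_1)\le t\bigr)$ satisfies $\beta(t)\to\tfrac12$ as $t\to0$ — indeed "$x_i$ is a record" forces $Z_{x_i}\ge Z_{x_{i+1}}$, and keeping the rightmost record at full strength while thinning the remaining one-dependent comparisons to an independent subfamily produces the product, the factor $p(\ell)$ arising from $\mathbb E\bigl[\mathbb P(x_L\text{ record}\mid X(x_L,\cdot))\bigr]=p(\ell)$.

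\textbf{Peierls argument and the main obstacle.} An infinite $*$-cluster through $u$ contains, for every $n$, a $*$-self-avoiding path $\gamma$ of length $n$ from $u$ inside the set in question; by cross-line independence $\mathbb P(\gamma\subseteq\cdot)$ factorizes over the lines $\gamma$ meets, each factor being controlled by the per-line estimates through the number of sites of $\gamma$ on that line and the gaps between them. For large $\ell$ this closes at once: every factor is $\le e^{-c'\ell}$ per site, beating the $\le 8^n$ paths once $\ell_2$ is large. For small $\ell$ it is delicate and is where the long-range correlations of $\alpha$ — the "common tall mountain" effect — must be tamed: on a given line $\gamma$ can economize by visiting few sites (but then it meets many lines, each contributing a factor $p(\ell)\to0$) or by spreading its sites out with large gaps (but a gap of size $d$ on a line forces $\gamma$ to traverse horizontal distance $d$ elsewhere, and, via $\sum_k\mathrm{span}_k\le(\#\text{lines})\cdot n$ and the geometry of length-$n$ $*$-paths, this cannot occur too often); balancing the cases should yield a bound $\rho(\ell)^n$ with $\rho(\ell)\to0$, so that a union bound over paths converges once $\ell_1$ is small enough — if necessary after a single renormalization step to organize the geometry. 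I expect this balancing, and the quantitative handling of the non–finite-range witnesses it rests on, to be the technical heart of the proof; by contrast the finite-range comparison (e.g.\ Liggett--Schonmann--Stacey domination of the $R$-dependent field $\{\alpha_R\le\ell\}$) becomes a genuinely useful tool only once the line-by-line record structure has reduced everything to local, independent ingredients.
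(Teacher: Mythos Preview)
Your overall architecture---reduce to ``no infinite $*$-cluster of the sparse set'', exploit independence across horizontal lines, and run a Peierls argument---is exactly the paper's. The large-$\ell$ sketch is in the right spirit, though your dichotomy ``either one far shadower covers the whole interval, or all shadows are bounded-range'' is not exhaustive, and the Peierls argument needs the bound for an \emph{arbitrary} finite subset of a line, not just an interval; the paper handles this by constructing, on the event $\{\alpha>\ell\text{ on }A\}$, a minimal set $J$ of ``principal shadowers'' (each $a_k\in A$ is dominated by some $T_\ell(a_k)$, and one inductively extracts a nested chain $a_{k_r}<T_\ell(a_{k_r})<a_{k_{r-1}}<\cdots$), then sums over all possible $J$, which is what produces the clean bound $\bigl(\tfrac{\rho(2-\rho)}{1-\rho}\bigr)^{|A|}$ with $\rho=e^{-c'\ell}$.

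The small-$\ell$ argument, however, has a real gap. Your per-line estimate, after thinning the one-dependent comparisons $Z_{x_i}\ge Z_{x_{i+1}}$ to an independent subfamily, yields at best a factor $\beta\!\to\!\tfrac12$ for every \emph{other} site, i.e.\ roughly $(1/\sqrt 2)^{L}$ for $L$ sites on a line. That constant does not depend on $\ell$ and is far too large to beat the $8^n$ $*$-paths; the extra $p(\ell)^m$ you collect (one per line visited) only helps when $m$ is comparable to $n$, while a path can stay on $O(1)$ lines. Your ``large gaps force horizontal traversal elsewhere'' observation does not rescue this: large gaps send $\beta\to 1$, weakening the bound, and the traversal cost is already counted in the other lines' factors. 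The missing idea---and the heart of the paper's Section~3---is that the pairwise comparisons are \emph{not} the right thing to keep. If $n_0$ sites on a line are $R_0$-close and all satisfy $\alpha_{R_0}\le\ell$, then (for $R_0\ell$ small) their $X$-values are within $h_0$ of being strictly decreasing, and $\Proba{Y_1>_h\cdots>_hY_{n_0}}\to 1/n_0!$ as $h\to 0$. Since $n_0!$ grows super-exponentially, one can choose $n_0$ with $1/n_0!\le \varepsilon^{2n_0}$ for \emph{any} prescribed $\varepsilon$, which is what turns the per-site cost into an arbitrarily small constant. The paper then glues this block estimate to the general case via an $R_0$-decomposition of $A$ (far-apart pieces are independent for $\alpha_{R_0}$; small pieces use the single-site bound $\Proba{\alpha_{R_0}\le\ell}\le\varepsilon^{2n_0}$). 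Your ``single renormalization step'' remark is pointing at the right fix, but the specific super-exponential input $1/n_0!$ is what makes it work.
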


\paragraph{Comments on Theorem \ref{a4:thm:principal} and open questions}
We comment that Theorem \ref{a4:thm:principal} is first a result in an objective of understanding the phase transition of this new model. Altough Theorem \ref{a4:thm:principal} is stated for a field $X$ defined on $\Z^2$, the proof would work exactly the same if we replaced $\Z^2$ other lattices such as the triangular lattice or the union jack lattice (with appropriate adjustment of the definition \eqref{a4:eq:def_alpha_general} of $\alpha$). For various usual percolation models (such as the Bernoulli percolation model, and Gaussian field percolation for instance), the phase transition is proven to be sharp, that is the map that to the level $\ell$ associates the probability of a big rectangle being crossed by $\{f\leq \ell\}$ converges to a Heaviside function when the size of the rectangle goes to infinity. In particular this implies the existence of a critical parameter $\ell_c\in \R$ such that if $\ell<\ell_c$ then $\{f\leq \ell\}$ has no infinite cluster but $\{f\geq \ell\}$ has one, and the converse is true when $\ell>\ell_c$. It is still an open question to know if, on the hexagonal lattice for instance, we one may take $\ell_1=\ell_2$ in Theorem \ref{a4:thm:principal} (which would prove the existence of such a critical parameter). Such a behaviour is suggested by numerical experiment, see Figure \ref{fig:4panel}. It is also an open question to understand whether or not the phase transition is sharp. Indeed, most strategies to prove sharpness in a phase transition rely either on the FKG inequality (see for instance \cite{Rus82},\cite{BR06},\cite{BDC12}), or on symmetries and invariance presented by the law of the field, notably the invariance by rotation (see \cite{MRVK23}, \cite{Riv21}). Unfortunately, the field $\alpha$ lacks these properties, and the usual arguments cannot be directly applied. Finally, we comment that the model also makes sense when the $X(u)$ are no longer independent but have correlations between them. In future work we plan to address this question. 
\paragraph{Strategy of proof}
We split the proof of Theorem \ref{a4:thm:principal} into two parts. In Section \ref{a4:sec:2} we study the regime of large $\ell$, that is the sun is very high and a large proportion of the mountain peaks should be lit by the sun. In this context a natural idea is to stochastically dominate the set $\{\alpha\geq \ell\}$ by a collection of independent Bernoulli random variable of small parameter. This is a known strategy (see \cite{LSS97} for result on stochastic domination). However, since our field $\alpha$ presents long range correlation, one would first need to define a truncated version of $\alpha$ and study how close it is to the actual field $\alpha$. We prefer to directly use the definition of the field $\alpha$. Our proof relies on understanding what constraints apply to the field $X$ when we ask a few points to be in the shadow even when $\ell$ is large. In Section \ref{a4:sec:3} we study the regime of small $\ell$, that is when the rays of the suns are almost parallel to the horizontal plane. We use the intuition that if the slope is $\ell=0$, then in order for $n$ points aligned in direction of the sun to be all lit by the sun (forgetting what is before them) then those $n$ mountains must have height in a very specific order (that is they must be order by descending height, the highest mountain in the back and the smallest mountain in front). However, we show that  this ordering has very small probability and we then extend this reasoning to $\ell>0$ small enough. In Appendix \ref{a4:sec:appendixA}, we tackle another problem which is to understand how much information about $X$ is stored in the field $\alpha$. We show that under some assumption then, almost surely, the field $X$ may be reconstructed from the field $\alpha$ up to an additive constant.

\paragraph{Notations}
We introduce some notations. We work on the cubic lattices of the form $\mathbb{L}=\Z^2$. Note that this lattice could be replaced by other lattices such as triangular lattices or the union jack lattices, but we choose $\Z^2$ for simplicity).
We say that two points $u=(u_1,u_2)$, $v=(v_1,v_2)$ of $\mathbb{L}$ are \textit{orthogonally adjacent} (and we write $u\sim v$) if $|u_1-v_1|+|u_2-v_2|=1$. We say that these two points are \textit{$\star$-connected} (and we write $u\sim_\star v$) if $\max(|u_1-v_1|,|u_2-v_2|)=1$. Given two vertices $u,v\in \mathbb{L}$, an \textit{orthogonally connected path} (resp. \textit{$\star$-connected path}) between the two vertices $u,v$ is the data of an integer $n\geq 0$ together with a sequence $a_0,a_1,\dots,a_{n-1}$ of vertices of $\mathbb{L}$ such that, $a_0=u$, $a_{n-1}=v$, for all $1\leq i \leq n-1$, $a_{i-1}\sim a_{i}$ (resp. $a_i\sim_\star a_{i+1}$). The interger $n$ is called the \textit{length} of the path. Moreover a path is said to not self-intersect if the map $i\mapsto a_i$ is injective. Given a subset $A\subset \mathbb{L}$ we write $u\overset{A}{\longleftrightarrow} v$ (resp. $u\overset{A}{\underset{\star}{\longleftrightarrow}} v$) if there exists at least one orthogonally connected path (resp $\star$-connected path) between $u$ and $v$ that only uses vertices in $A$. The \textit{orthogonally connected component} (resp. \textit{$\star$-connected component}) in $A$ of a vertex $u$ can be written as $C_A(u):=\{v\in \mathbb{L}\ |\ u\overset{A}{\longleftrightarrow}v\}$ (resp. $C^\star_A(u) :=\{v\in \mathbb{L}\ |\ u\overset{A}{\underset{\star}{\longleftrightarrow}}v\} $). We write $u\overset{A}{\longleftrightarrow}\infty$ if $C_A(u)$ is unbounded.

Given a fixed level $\ell\in \R$ we want to understand the behavior of the sets 
\begin{align}
    \{\alpha \leq \ell\} &:= \{u\in \Z^2\ |\ \alpha(u)\leq \ell\}, \\
    \{\alpha \geq \ell\} &:= \{u\in \Z^2\ |\ \alpha(u)\geq \ell\}.
\end{align}
In particular, for a fixed realization of the field $\alpha$, we see that the map $\ell \mapsto \{\alpha \leq \ell\}$ is non decreasing in $\ell$ for set inclusion, whereas $\ell\mapsto \{\alpha \geq \ell\}$ is non increasing in $\ell$. Denote by $\Theta_{\leq \ell}$ and $\Theta_{\geq \ell}$ the two following quantities.
\begin{align}
    \Theta_{\leq \ell} := \Proba{0\overset{\{\alpha\leq \ell\}}{\longleftrightarrow}\infty}, \\
    \Theta_{\geq \ell} := \Proba{0\overset{\{\alpha \geq \ell\}}{\longleftrightarrow}\infty}.
\end{align}
Then the map $\ell \mapsto \Theta_{\leq \ell}$ is non decreasing in $\ell$ whereas $\ell\mapsto \Theta_{\geq \ell}$ is non increasing in $\ell$.
We define two critical parameters,
\begin{align}
    \ell_c^\leq := \sup\{\ell \in \R \ |\ \Theta_{\leq \ell}=0\}\in \overline{\R},\\
    \ell_c^\geq := \inf\{\ell \in \R\ | \ \Theta_{\geq \ell}=0\}\in \overline{\R},
\end{align}
with the usual conventions $\sup\emptyset=-\infty$ and $\inf\emptyset =+\infty.$
We remark that Theorem \ref{a4:thm:principal} can be reformulated as
\begin{theorem}
\label{a4:thm:discrete}
Under Assumption \ref{a4:a:a1} and \ref{a4:a:a2} then $0<\ell_c^\leq$ and $0<\ell_c^\geq$.
Under Assumptions \ref{a4:a:a1} and \ref{a4:a:a3} then $\ell_c^\leq<\infty$ and $\ell_c^\geq<\infty$.
\end{theorem}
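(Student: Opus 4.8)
\medskip
\noindent\emph{Strategy of proof of Theorem~\ref{a4:thm:discrete}.}

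The plan is to reduce the statement to two one-sided estimates and then treat the small-slope and large-slope regimes separately, both by Peierls-type (first-moment) arguments but relying on different features of $\alpha$. Since $X$ is i.i.d., $\alpha$ is stationary and ergodic under translations of $\Z^2$, and the events ``$\{\alpha\le\ell\}$ has an infinite cluster'' and ``$\{\alpha\ge\ell\}$ has an infinite cluster'' are translation invariant, hence of probability $0$ or $1$, equal to $1$ iff $\Theta_{\le\ell}>0$, resp.\ $\Theta_{\ge\ell}>0$. By the monotonicity of $\ell\mapsto\Theta_{\le\ell}$ and $\ell\mapsto\Theta_{\ge\ell}$ it then suffices to produce one $\ell_1>0$ with $\Theta_{\le\ell_1}=0$ and $\Theta_{\ge\ell_1}>0$ (which forces $\ell_c^\leq\ge\ell_1>0$ and $\ell_c^\geq\ge\ell_1>0$), and one $\ell_2<\infty$ with $\Theta_{\le\ell_2}>0$ and $\Theta_{\ge\ell_2}=0$ (which forces $\ell_c^\leq\le\ell_2<\infty$ and $\ell_c^\geq\le\ell_2<\infty$). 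Throughout I will use three structural facts: (i) by \eqref{a4:eq:def_alpha_general}, $\alpha(u)$ depends on $X$ only through the ray $\{u+re_1:r\ge1\}$, so distinct rows of $\Z^2$ are independent and within a row $\alpha$ is stationary; (ii) the \emph{ordering identity}: if $c_1<\dots<c_s$ and $\alpha((c_i,y))\le\ell$ for all $i$, then $X((c_i,y))-c_i\ell$ is non-increasing in $i$ (use \eqref{a4:eq:equiv_alpha} with $r=c_j-c_i$); (iii) writing $p(\ell):=\mathbb{P}(\alpha(0)\le\ell)$, one has $p(\ell)\to0$ as $\ell\downarrow0$ under Assumption~\ref{a4:a:a2} (since $\alpha(0)\le\ell$ forces $X(0)\ge\sup_{r\ge1}(X(re_1)-r\ell)$, a quantity exceeding any fixed level with probability $\to1$ as $\ell\downarrow0$, $\mu$ being atomless with infinitely many independent samples) and $p(\ell)\to1$ as $\ell\to\infty$ under Assumption~\ref{a4:a:a3} (exponential tails make $X(re_1)/r\to0$ a.s., so $\alpha(0)<\infty$ a.s.).

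For the \textbf{large-slope regime} (Assumption~\ref{a4:a:a3}), $\mathbb{P}(\alpha(0)\ge\ell)\le\sum_{r\ge1}\mathbb{P}(X(re_1)-X(0)>r\ell)\le\sum_{r\ge1}Ce^{-cr\ell}$ is exponentially small in $\ell$. A point $u$ is in the shadow only because some mountain $u+r(u)e_1$ is much taller than $u$, and such a ``witness'' is short with overwhelming probability, $\mathbb{P}(r(u)=r)\le Ce^{-cr\ell}$. For a finite $\star$-connected set $\Gamma$ of shadowed points I would union-bound over the vector of witnesses and, revealing the relevant row from the right to decouple the constraints, obtain $\mathbb{P}(\Gamma\subseteq\{\alpha\ge\ell\})\le\theta(\ell)^{|\Gamma|}$ with $\theta(\ell)\to0$ as $\ell\to\infty$. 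A Peierls count over $\star$-circuits around the origin then gives, for $\ell$ large, that almost surely $\{\alpha\ge\ell\}$ has no infinite $\star$-cluster, in particular $\Theta_{\ge\ell}=0$; and the planar duality of site percolation on $\Z^2$ (no infinite $\star$-cluster of the ``closed'' set forces an infinite ordinary cluster of the ``open'' set) gives $\Theta_{\le\ell}>0$. The delicate point is turning the witness union-bound into a genuine per-vertex exponential factor despite overlaps of the windows $\{u,\dots,u+r(u)e_1\}$; this is where the exponential tail hypothesis is used quantitatively.

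For the \textbf{small-slope regime} (Assumption~\ref{a4:a:a2}), I would prove $\Theta_{\le\ell}=0$ by bounding $\mathbb{P}\bigl(0\overset{\{\alpha\le\ell\}}{\longleftrightarrow}\partial[-n,n]^2\bigr)\le\sum_\gamma\mathbb{P}(\gamma\subseteq\{\alpha\le\ell\})$ over self-avoiding paths $\gamma$ from $0$ to the boundary. By fact (i) this probability factorizes over rows, and by fact (ii), revealing the $y$-th row from the right, the row-$y$ factor is controlled by a product over the columns $c_1<\dots<c_{w_y}$ of $\gamma$ in that row of one-step probabilities $\mathbb{P}(X-X'\le g\ell)$ (close to $\tfrac12$ for $\ell$ small) times one factor $p(\ell)$ for the rightmost visited column — the gains being a $p(\ell)\to0$ factor per visited row and a geometric factor per visited column. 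The remaining task is to check that this beats the number of paths, and \emph{this is the main obstacle}: because the within-row dependence of $\alpha$ is genuinely long-ranged, the resulting sum is not a clean geometric series, so one must organize the count (by the number of rows visited and the horizontal spread of $\gamma$ in each row, exploiting that a long horizontal excursion in one row forces dense occupation — hence strong decay — in a neighbouring row) or refine the first-moment estimate by a multi-scale / conditioning scheme. Finally, $\Theta_{\ge\ell}>0$ follows because, if $0$ is not connected to infinity in $\{\alpha\ge\ell\}$, it is surrounded by a $\star$-circuit of $\{\alpha\le\ell\}$; hence it suffices that $\mathbb{P}(\text{some }\{\alpha\le\ell\}\text{-}\star\text{-circuit surrounds }0)<1$, which follows from the same kind of estimate applied to $\star$-paths once $\ell$ is small enough; alternatively, one argues directly by renormalization, using that the horizontal slabs $\Z\times[kM,(k+1)M)$ are independent and, for $\ell$ small, each is with probability close to $1$ not blocked by $\{\alpha\le\ell\}$.

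To summarize: both halves are Peierls arguments, but the absence for $\alpha$ of finite-range dependence, FKG, and rotation invariance prevents the contour estimates from collapsing to elementary geometric series. I expect the hardest step to be the quantitative control of $\mathbb{P}(\gamma\subseteq\{\alpha\le\ell\})$ uniformly over long paths $\gamma$ in the small-slope regime — that is, upgrading the one-dimensional phenomenon ``a long descending run is rare'' to a robust two-dimensional statement.
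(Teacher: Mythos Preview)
Your architecture (row-independence, the ordering identity, Peierls with planar $\star$-duality) matches the paper, but the obstacles you single out are precisely the ones the paper resolves with ideas you do not have, so as it stands the proposal has genuine gaps.

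\medskip
\noindent\textbf{Small-$\ell$ regime.} Your per-column factor ``$\mathbb{P}(X-X'\le g\ell)\approx\tfrac12$'' is too weak: $(\tfrac12)^{n}$ does not beat the $4^{n}$ self-avoiding paths, and the extra $p(\ell)$ per visited row cannot save a path that stays in a single row and zigzags only mildly. The point you are missing is that the ordering identity gives a \emph{super-exponential} bound, not a geometric one: if $Y_1,\dots,Y_{n}$ are i.i.d.\ with a density then $\mathbb{P}(Y_1>Y_2>\dots>Y_n)=\tfrac{1}{n!}$, and by continuity $\mathbb{P}(Y_1>_hY_2>_h\cdots>_hY_n)\to\tfrac{1}{n!}$ as $h\downarrow0$. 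Hence for every $\varepsilon>0$ one can pick $n_0$ with $1/n_0!\le \tfrac12\varepsilon^{2n_0}$ and then $h_0>0$ so that the $>_h$-chain of length $n_0$ has probability $\le\varepsilon^{2n_0}$. Blocking a row-run of the path into consecutive pieces of length $n_0$ (dropping one inequality between blocks to regain independence) yields $\varepsilon^{|A|}$. A second idea you omit is the truncation $\alpha_R(u):=\max_{1\le r\le R}(X(u+re_1)-X(u))/r$: since $\alpha\le\ell\Rightarrow\alpha_R\le\ell$, one may pass to the finite-range field, choose $R$ large so that $\mathbb{P}(\alpha_R(0)\le\ell)$ is tiny, and then split any $A\subset\Z$ into ``$R$-connected'' blocks that are genuinely independent. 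With these two ingredients the paper proves the clean estimate $\mathbb{P}(\alpha\le\ell\text{ on }A)\le\varepsilon^{|A|}$ for \emph{every} finite $A\subset\Z^2$, after which the Peierls count is a one-line union bound and none of the combinatorics you worry about arises.

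\medskip
\noindent\textbf{Large-$\ell$ regime.} Your witness idea is the right starting point, but ``reveal the row from the right'' is not enough to decouple, because two shadowed points $i<j$ may share the same witness or have nested witness intervals. The paper's device is to pick for each $i$ the \emph{principal shadower} $T_\ell(i):=\arg\max_{k>i}(Y_k-(k-i)\ell)$, observe that $\alpha(T_\ell(i))\le\ell$ (so $T_\ell(i)\notin A$), and show that if $i<j$ are both shadowed and $i\not\prec_\ell T_\ell(j)$ then $T_\ell(i)$ lies strictly between $i$ and $j$. This lets one build, for any shadowed set $A=\{a_1<\dots<a_n\}$, a minimal set $J=\{j_r<\dots<j_0\}$ of principal shadowers interlaced with $A$; conditioning on $|J|=r+1$ and summing over positions of the $j_m$'s gives $\mathbb{P}(E_r)\le\binom{n-1}{r}\rho^{n}/(1-\rho)^{r+1}$ with $\rho=e^{-c'\ell}$, and summing over $r$ yields $\mathbb{P}(\alpha\ge\ell\text{ on }A)\le\bigl(\rho(2-\rho)/(1-\rho)\bigr)^{|A|}$, again for every finite $A$.

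\medskip
In short, the paper upgrades both your sketches to the uniform set-estimate $\mathbb{P}(\alpha\gtrless\ell\text{ on }A)\le\varepsilon^{|A|}$, which makes the Peierls step trivial; the two missing keys are the $1/n!$ factorial input for small $\ell$ and the principal-shadower decomposition for large $\ell$.
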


\paragraph{Acknowledgements: } I am grateful to my PhD advisor Damien Gayet who first presented me this model and offered me to study it and also reviewed a first version of this manuscript. I also would like to thank Florent Gimbert who suggested that such models could also be of interest in the understanding of glacier sliding.

\section{The regime of large $\ell$}
\label{a4:sec:2}
The main result of this section is the Proposition \ref{a4:prop:regime_gg_1} which addresses the case of large $\ell$.
\begin{proposition}
\label{a4:prop:regime_gg_1}
Let $X : \Z^2 \to \R$ and $\mu$ be such that Assumption \ref{a4:a:a1} holds. Assume that $\mu$ satisfies Assumption \ref{a4:a:a3}. For any $\varepsilon>0$ there exists $0<\ell_0(\varepsilon)<\infty$ such that if $\ell>\ell_0(\varepsilon)$ the following holds. For any finite subset $A\subset \Z^2$ we have
$$\Proba{\forall u\in A,\ \alpha(u)\geq \ell}\leq \varepsilon^{|A|},$$
where $|A|$ denotes the cardinal of $A$.
\end{proposition}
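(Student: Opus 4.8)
The plan is to bound the probability that all points of $A$ are in the shadow by expanding the definition of $\alpha(u)\geq \ell$ and exploiting the independence of the $X(u)$. Recall that $\alpha(u)\geq \ell$ means that for some $r=r(u)\in \mathbb{N}^*$ we have $X(u+re_1)-X(u)\geq \ell r$; in particular, the mountain at $u+re_1$ is much higher than the one at $u$ when $\ell$ is large. The key point is that this forces the values of $X$ to be large somewhere, and since the $X(v)$ have an exponential tail (Assumption \ref{a4:a:a3}), such an event is costly. First I would reduce to a single direction: since all rays travel in direction $e_1$, the constraint on $A$ splits over the rows of $A$, so it suffices to handle a finite set $A$ contained in a single horizontal line, i.e. $A=\{(a_1,0),\dots,(a_k,0)\}$ with $a_1<\dots<a_k$; the full bound then follows by multiplying over rows using independence of $X$ on disjoint rows.

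On a single line, write $x_i = X((a_i,0))$ and, for a point $a_i$, let $r_i$ be the witness so that $X((a_i+r_i,0)) \geq x_i + \ell r_i$. The heart of the argument is a deterministic combinatorial observation: if a long run of consecutive points on the line (or points spaced within a bounded window) are all in the shadow, then along the line the function $X$ must eventually take a very large value, or must contain a long increasing-type chain with slopes $\geq \ell$. Concretely, I would argue that on any block of $N$ consecutive integers all lying in $A$, there is a chain $j_0 < j_1 < \dots < j_m$ of points, with $m \geq cN$ for some $c>0$, and positive gaps $r_s = j_{s+1}-j_s$, along which $X(j_{s+1}) - X(j_s) \geq \ell r_s$. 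Summing telescopically, $X(j_m) - X(j_0) \geq \ell (j_m - j_0) \geq \ell m$, so some coordinate of $X$ inside the block exceeds $X(j_0) + \ell m \geq -M + \ell cN$ once we also control $X(j_0)$ from below. Then Assumption \ref{a4:a:a3} gives an exponentially small bound, in $\ell$ and $N$, for such an event, and one chooses $\ell_0(\varepsilon)$ large enough that the per-point cost is below $\varepsilon$.

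A cleaner route that I would actually pursue: for each $u\in A$ on the line, expand $\Proba{\alpha(u)\geq\ell} = \Proba{\exists r\geq 1:\ X(u+re_1)-X(u)\geq \ell r}$ and, by a union bound over $r$ together with Assumption \ref{a4:a:a3} and an integration against the density (Assumption \ref{a4:a:a2} is not needed here, only \ref{a4:a:a3}), show $\Proba{\alpha(u)\geq\ell}\leq C'e^{-c'\ell}$; this is the "single point" estimate. The difficulty is that these events for different $u\in A$ are \emph{not} independent, because one witness $r(u)$ may point far past other elements of $A$ and the relevant $X$-values overlap. To get the product bound $\varepsilon^{|A|}$ I would condition sequentially: order $A$ along the line, and reveal $X$ from right to left (largest first coordinate first); when we come to process $u$, the event $\alpha(u)\geq\ell$ either uses a witness $r(u)$ landing on a coordinate already revealed — and then, conditionally on those revealed values, the event $\{X(u+r(u)e_1)-X(u)\geq\ell r(u)\}$ is an event on the single fresh variable $X(u)$ forcing it small, which again costs a factor $\leq C''e^{-c''\ell}$ uniformly in the revealed values — or it uses a witness landing on a not-yet-revealed coordinate, which we can absorb into the tail sum. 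Iterating over the $|A|$ points of the line yields the factor $(C''e^{-c''\ell})^{|A_{\text{line}}|}$, and multiplying over the disjoint lines gives $\big(\text{const}\cdot e^{-c\ell}\big)^{|A|}$, which is $\leq \varepsilon^{|A|}$ for $\ell>\ell_0(\varepsilon)$.

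The main obstacle is precisely handling the long-range dependence between the shadow events at different points of $A$: a naive union bound or independence claim fails because the witnesses $r(u)$ can reach arbitrarily far. The sequential revelation scheme above is what resolves it, but making the conditional estimate uniform in the revealed configuration — in particular checking that "$X(u)$ must be smaller than some (random, already-revealed) threshold minus $\ell r$" still has probability $\leq C e^{-c\ell}$ regardless of that threshold — is the step that requires care, and is where Assumption \ref{a4:a:a3} (light tails of $\mu$, hence of $-X(u)$ as well, so that a downward constraint of size $\ell$ is exponentially unlikely) gets used in an essential way.
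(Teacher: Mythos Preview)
Your reduction to a single horizontal line, followed by a product over lines, is correct and is exactly how the paper begins. Both of your one-line arguments, however, have real gaps. The chain claim is false as a combinatorial statement: if a single distant point $p$ shadows every point of a block (take $X\equiv 0$ on the block and $X(p)$ very large), then each $i$ in the block satisfies $i\prec_\ell p$, yet no $\prec_\ell$-chain has length exceeding $1$, so the telescoping never gets started. In the sequential-revelation scheme, the conditional bound you flag as ``the step that requires care'' actually fails. After revealing the field to the right of $u$, the event $\{\alpha(u)\geq\ell\}$ becomes $\{X(u)\leq M_u\}$ with $M_u=\sup_{r\geq1}(X(u+r)-\ell r)$, and its conditional probability is $F(M_u)$ for $F$ the cdf of $\mu$. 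This is \emph{not} bounded by $Ce^{-c\ell}$ uniformly in the revealed configuration: if the revealed values give $M_u$ large, the conditional probability is essentially $1$. Assumption~\ref{a4:a:a3} controls the upper tail of $X(u)$, but says nothing uniform about $\Proba{X(u)\leq T}$ as $T\to+\infty$. That $M_u$ is itself unlikely to be large is a \emph{joint} statement about the whole configuration, not a conditional factor you can peel off point by point.

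The paper resolves the dependence by a different mechanism. For each $a$ on the line it defines the \emph{principal shadower} $T_\ell(a)$, the index $j>a$ maximizing $Y_j-(j-a)\ell$, and proves two structural facts: (i) $\alpha(T_\ell(a))\leq\ell$, so in particular $T_\ell(a)\notin A$; and (ii) if $a<b$ are both in shadow and $a\not\prec_\ell T_\ell(b)$, then $a<T_\ell(a)<b$. From these one extracts, on the event $\{\forall a\in A:\alpha(a)>\ell\}$, indices $n=k_0>k_1>\cdots>k_r\geq 1$ and an interlacing set $J=\{j_r<\cdots<j_0\}\subset\Z\setminus A$ of shadowers with $j_m=T_\ell(a_{k_m})$, such that the constraints $a_1\prec_\ell j_r$ and $a_{k_m+1}\prec_\ell j_{m-1}$ ($1\leq m\leq r$) involve pairwise disjoint sites and are therefore independent. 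Summing $\Proba{i\prec_\ell j}\leq\rho^{\,j-i}$ (with $\rho=e^{-c'\ell}$) over all admissible positions of the $j_m$ and all sequences $(k_m)$ yields the closed-form bound $\bigl(\rho(2-\rho)/(1-\rho)\bigr)^{|A|}$. The missing idea in your attempt is precisely this: the maximal shadowers automatically lie \emph{outside} $A$, and that disjointness is what converts the dependent family of shadow events into a product of genuinely independent $\prec_\ell$-constraints.
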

\begin{remark}
In the special case where $\mu$ is the probability distribution of a standard Gaussian random variable (that is $\mu(dx)=\frac{1}{\sqrt{2\pi}}e^{-x^2/2}dx$) then $\ell$ can be chosen explicitly in terms of $\varepsilon$. Indeed, in this case it can be checked that for all $\ell>0$, for any finite subset $A\subset \Z^2$,
$$\Proba{\forall u\in A,\ \alpha(u)\geq \ell}\leq  \left(\frac{q(2-q)}{1-q}\right)^{|A|},$$
where $q=e^{-\ell^2/4}$.
\end{remark}
We argue that by the classical Peierls argument, see also \cite{MS83_1} and \cite{MS83_2}, Proposition \ref{a4:prop:regime_gg_1} implies $\ell_c^\leq<\infty$ and $\ell_c^\geq<\infty$.
\begin{corollary}
\label{a4:cor:discrete_indep}
Let $X : \Z^2 \to \R$ and $\mu$ be such that Assumption \ref{a4:a:a1} holds. Assume that $\mu$ satisfies Assumption \ref{a4:a:a3}.
Then, $\ell_c^\leq<\infty$ and $\ell_c^\geq<\infty$.
\end{corollary}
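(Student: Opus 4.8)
The plan is to deduce Corollary \ref{a4:cor:discrete_indep} from Proposition \ref{a4:prop:regime_gg_1} by a standard Peierls-type counting argument, run separately for the two excursion sets. Recall that we want to show that for $\ell$ large, almost surely $\{\alpha \geq \ell\}$ has no infinite cluster while $\{\alpha \leq \ell\}$ does; the first gives $\ell_c^\geq < \infty$ and, by planar duality of $\star$-connectivity versus orthogonal connectivity, the second follows from the first (with a possibly different threshold).

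\textbf{Step 1: absence of an infinite cluster in $\{\alpha \geq \ell\}$.} Fix $\varepsilon < 1/9$ (more precisely, small enough that $8\varepsilon < 1$, since a self-avoiding $\star$-connected path in $\Z^2$ has at most $8$ choices at each step — we can afford to be generous here) and take $\ell > \ell_0(\varepsilon)$ as provided by Proposition \ref{a4:prop:regime_gg_1}. If $0 \overset{\{\alpha \geq \ell\}}{\underset{\star}{\longleftrightarrow}} \infty$, then for every $n$ there is a self-avoiding $\star$-connected path $\gamma = (a_0, \dots, a_{n-1})$ starting at $0$ with all $a_i \in \{\alpha \geq \ell\}$. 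Taking $A = \{a_0, \dots, a_{n-1}\}$, which has $|A| = n$, Proposition \ref{a4:prop:regime_gg_1} gives $\Proba{\forall u \in A,\ \alpha(u) \geq \ell} \leq \varepsilon^n$. A union bound over the at most $8 \cdot 7^{n-1} \leq 8^n$ self-avoiding $\star$-connected paths of length $n$ from $0$ yields
\begin{equation*}
    \Proba{\exists \text{ self-avoiding } \star\text{-path of length } n \text{ from } 0 \text{ in } \{\alpha \geq \ell\}} \leq (8\varepsilon)^n \to 0.
\end{equation*}
Hence $\Proba{0 \overset{\{\alpha \geq \ell\}}{\underset{\star}{\longleftrightarrow}} \infty} = 0$, and a fortiori $\Theta_{\geq \ell} = 0$; by stationarity of $\alpha$ together with a countable union over the starting vertex, almost surely $\{\alpha \geq \ell\}$ (even in the $\star$-connected sense, hence also orthogonally) has no infinite cluster. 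In particular $\ell \geq \ell_c^\geq$, so $\ell_c^\geq \leq \ell_0(\varepsilon) < \infty$.

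\textbf{Step 2: presence of an infinite cluster in $\{\alpha \leq \ell\}$.} Here I use the planar topological duality between site percolation on $\Z^2$ and its matching ($\star$-connected) lattice: if $\{\alpha \geq \ell\}$ has no infinite $\star$-cluster and, moreover, there is no $\star$-connected circuit of $\{\alpha \geq \ell\}$ surrounding the origin, then $\{\alpha \leq \ell\}$ contains an infinite orthogonally connected cluster. To rule out such circuits, note that a $\star$-connected circuit around $0$ at $\ell^\infty$-distance $k$ from $0$ has length at least $4k$ and consists entirely of points in $\{\alpha \geq \ell\}$, so by the same reasoning as in Step 1 its probability is at most $C k \, (8\varepsilon)^{4k}$ (the factor $Ck$ bounding the number of possible ``anchor'' points), and $\sum_k C k (8\varepsilon)^{4k} < 1$ provided $\varepsilon$ is chosen small enough. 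Taking $\ell > \ell_0(\varepsilon)$ for this smaller $\varepsilon$, by Borel–Cantelli (or directly, since the total probability is $<1$) almost surely only finitely many such circuits occur; on the complementary event one builds an infinite orthogonally connected path in $\{\alpha \leq \ell\}$ from $0$ by a standard exploration/outermost-circuit argument, so $\Theta_{\leq \ell} > 0$ and hence $\ell \leq \ell_c^\leq$ cannot hold, giving $\ell_c^\leq \leq \ell_0(\varepsilon) < \infty$. Setting $\ell_2$ to be the larger of the two thresholds obtained in Steps 1 and 2 completes the proof.

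\textbf{Main obstacle.} The genuinely delicate point is not the Peierls counting — which is routine once Proposition \ref{a4:prop:regime_gg_1} is in hand — but the duality bookkeeping in Step 2: one must be careful that the matching relation between orthogonal and $\star$ connectivity is used in the correct direction, and that ``no infinite $\star$-cluster of $\{\alpha \geq \ell\}$ and finitely many surrounding circuits'' really does force an infinite orthogonal cluster of $\{\alpha \leq \ell\}$. This is classical (it is exactly the argument that $p_c^{\mathrm{site}}(\Z^2) + p_c^{\mathrm{site}}(\Z^2_\star) = 1$ type statements rely on), so I would simply invoke the standard planar duality lemma for site percolation rather than re-prove it. A secondary point is that Proposition \ref{a4:prop:regime_gg_1} is stated with the exponent $|A|$ uniformly over \emph{all} finite $A$, which is exactly what makes the union bound over paths and circuits go through without any independence or FKG input — so the proof genuinely only needs that one uniform estimate.
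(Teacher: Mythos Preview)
Your approach is the same as the paper's: Peierls counting from Proposition \ref{a4:prop:regime_gg_1}, first on self-avoiding paths in $\{\alpha \geq \ell\}$ to show $\ell_c^\geq < \infty$, then on $\star$-circuits in $\{\alpha \geq \ell\}$ around the origin (via planar duality) to show $\ell_c^\leq < \infty$. The paper counts orthogonal paths ($4^n$) in the first step rather than $\star$-paths ($8^n$), but your version is fine and in fact slightly stronger.

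One slip in Step 2: the Borel--Cantelli conclusion ``almost surely only finitely many circuits'' is not what delivers $\Theta_{\leq \ell}>0$ --- finitely many surrounding circuits can still block the origin. What you actually need, and what your own parenthetical already says, is that the \emph{union bound} $\sum_n n\,8^n\varepsilon^n<1$ gives \emph{positive probability of no} $\star$-circuit of $\{\alpha\geq\ell\}$ around $0$; on that event (intersected with $\{\alpha(0)<\ell\}$, whose complement has probability at most $\varepsilon$ by Proposition \ref{a4:prop:regime_gg_1} with $|A|=1$) duality yields $0 \overset{\{\alpha < \ell\}}{\longleftrightarrow} \infty$, hence $\Theta_{\leq \ell}>0$. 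This is exactly how the paper phrases it, bounding $1-\Proba{0 \overset{\{\alpha < \ell\}}{\longleftrightarrow} \infty}$ directly by the circuit sum. Drop the Borel--Cantelli/outermost-circuit detour and your write-up matches the paper's proof.
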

The proof is completely classical and we provide it for the convenience of the reader.
\begin{proof}[Proof of Corollary \ref{a4:cor:discrete_indep}]
     Let $\varepsilon>0$ be small parameter and $\ell_0$ associated to it by Proposition \ref{a4:prop:regime_gg_1}. Let $\ell>\ell_0$ and $n\in \N^*$.
     
     Let us first show that Proposition \ref{a4:prop:regime_gg_1} implies that $\ell_c^\geq<\infty$. In $\Z^2$ there are at most $4^n$ orthogonally connected paths which do not self-intersect that start from $(0,0)$ and of length $n$. For the event $\{0\overset{\{\alpha\geq \ell\}}{\longleftrightarrow}\infty\}$ to occur, there must be at least one of these paths of length $n$ such that $\alpha(u)\geq \ell$ for all $u$ in this path.
    However, for a fixed path $\gamma$ of length $n$, the probability that $\alpha(u)\geq \ell$ for all $u$ in the path $\gamma$ is bounded from above by $\varepsilon^n$ (by Proposition \ref{a4:prop:regime_gg_1}). We can do an union bound on the at most $4^n$ such paths to get
    \begin{equation}
    \label{a4:eq:p1}
        \forall n\geq 1,\ \Proba{0\overset{\{\alpha\geq \ell\}}{\longleftrightarrow}\infty}\leq \left(4\varepsilon\right)^n
    \end{equation}
    By choosing $\varepsilon <\frac{1}{4}$ and letting $n$ go to infinity in \eqref{a4:eq:p1} we get
    \begin{equation}
            \forall \ell\geq \ell_0,\ \Proba{0\overset{\{\alpha\geq \ell\}}{\longleftrightarrow}\infty}=0.
    \end{equation}
    This shows $\ell_c^\geq<\infty.$
    
    Let us now show that Proposition \ref{a4:prop:regime_gg_1} implies that $\ell_c^\leq<\infty$. In fact for the event $\{0\overset{\{\alpha < \ell\}}{\longleftrightarrow}\infty\}$ to not occur, there must exist some $n\geq 1$ and some non self-intersecting circuit $\gamma$ of length $n$ separating $0$ from infinity such that for all $u$ in $\gamma$ we have $\alpha(u)\geq \ell$. Note that for such a circuit to disconnect $0$ from $\infty$, then this circuit does not necessarily requires to be orthogonally connected, but requires to be $\star-$connected. Therefore, the number of such circuits of length $n$ is no more than $n\times8^n$ (in fact one may choose the first point of the circuit in $\{0,\dots, n\}\times \{0\}$ and each vertex has at most $8$ neighbors for $\sim_\star$).
    We can thus obtain the following crude upper bound,
    \begin{equation}
        \label{a4:eq:p2}
        1-\Proba{\{0\overset{\{\alpha < \ell\}}{\longleftrightarrow}\infty\}} \leq \sum_{n\geq 1}n8^n\varepsilon^n.
    \end{equation}
    A straightforward computation shows that the right hand side in \eqref{a4:eq:p2} is finite and strictly less than $1$ if we choose $\varepsilon>0$ small enough. Therefore, we may find $\ell_0\in ]0,\infty[$ (associated to this $\varepsilon$) such that,
    $$\forall \ell>\ell_0,\ \Proba{0\overset{\{\alpha < \ell\}}{\longleftrightarrow}\infty}>0.$$
    This implies $\ell_c^\leq<\infty.$
\end{proof}
The rest of this section is devoted to the proof of Proposition \ref{a4:prop:regime_gg_1}. We will restrict ourselves to the case of a subset $A\subset \Z \times \{0\}$. In fact given the definition of $\alpha$ in \eqref{a4:eq:def_alpha_general} we can see that the restriction of $\alpha$ to the lines $\Z\times \{n\}$, where $n$ ranges over $\Z$, are independent and identically distributed.

To ease notations, let us write $Y_i=X_{(i,0)}$ for $i\in \Z$. Then $(Y_i)_{i\in \Z}$ is a collection of independent standard Gaussian random variables. Let us fix $\ell>0$.
We define a relation $\prec_\ell$ between points of $\Z$ as follows:
\begin{definition}
    For $i,j\in \Z$ we write $i\prec_\ell j$ if $i<j$ and $Y_j>Y_i+(j-i)\ell$.
\end{definition}
\begin{remark}
This relation is transitive, if $i\prec_\ell j$ and $j\prec_\ell k$ then $i\prec_\ell k$. In terms of shadow, if the rays of the sun make a slope $\ell$ with the horizontal, then the mountain $(j,Y_j)$ casts shadow on the mountain $(i,Y_i)$ if and only if $i\prec_\ell j$. Moreover, observe than we can rewrite $\alpha(i)>\ell \Leftrightarrow \exists j>i, i\prec_\ell j.$
\end{remark}
In the following we will make repeated use of the following consequence of Assumptions \ref{a4:a:a1} and \ref{a4:a:a3}.
\begin{claim}
    \label{a4:claim:F}
    Assume that $X$ satisfies Assumptions \ref{a4:a:a1} and \ref{a4:a:a3}. Then there exists $c'>0$ and $x_0>0$ such that
    \begin{equation}
        \forall x\geq x_0,\ \forall u\neq v\in \Z^2,\ \Proba{|X(u)-X(v)|\geq x}\leq e^{-c'x}.
    \end{equation}
\end{claim}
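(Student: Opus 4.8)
The plan is to derive Fact \ref{a4:claim:F} directly from Assumption \ref{a4:a:a3} together with the independence of the entries of $X$ given by Assumption \ref{a4:a:a1}. Fix $u\neq v$ in $\Z^2$; then $X(u)$ and $X(v)$ are independent random variables both distributed according to $\mu$, so the distribution of $X(u)-X(v)$ does not depend on the particular pair $(u,v)$. Hence it suffices to produce a single pair of constants $c'>0$ and $x_0>0$ which work for $X(u)-X(v)$ where $X(u),X(v)$ are i.i.d. with law $\mu$.

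First I would observe the elementary inclusion: if $|X(u)-X(v)|\geq x$, then at least one of $|X(u)|$ and $|X(v)|$ is at least $x/2$ (otherwise the triangle inequality would give $|X(u)-X(v)|<x$). Therefore, by a union bound and Assumption \ref{a4:a:a3},
\begin{equation}
\Proba{|X(u)-X(v)|\geq x}\leq \Proba{|X(u)|\geq x/2}+\Proba{|X(v)|\geq x/2}\leq 2Ce^{-cx/2}.
\end{equation}
Here I have used $\Proba{|X(u)|\geq x/2}=\mu(\R\setminus[-x/2,x/2])\leq Ce^{-cx/2}$, and similarly for $v$.

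It remains to absorb the constant $2C$ into the exponential at the cost of slightly worsening the rate. Pick any $c'\in(0,c/2)$; then $2Ce^{-cx/2}=2Ce^{-c'x}e^{-(c/2-c')x}$, and since $c/2-c'>0$ the factor $2Ce^{-(c/2-c')x}$ tends to $0$ as $x\to\infty$. Choosing $x_0>0$ large enough that $2Ce^{-(c/2-c')x_0}\leq 1$ ensures $2Ce^{-(c/2-c')x}\leq 1$ for all $x\geq x_0$, whence $\Proba{|X(u)-X(v)|\geq x}\leq e^{-c'x}$ for all $x\geq x_0$. Since this bound is uniform over all pairs $u\neq v$, the claim follows.

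There is essentially no obstacle here: the statement is a routine consequence of the exponential tail bound and independence, and the only minor subtlety is the bookkeeping needed to turn the prefactor $2C$ into a clean exponential bound, which is handled by shrinking the exponent and taking $x_0$ large. One could alternatively avoid independence entirely and simply use $|X(u)-X(v)| \le |X(u)| + |X(v)|$ with a union bound, which is what the argument above does; independence is not actually needed for this particular estimate, only the common marginal tail bound from Assumption \ref{a4:a:a3}.
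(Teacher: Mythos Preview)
Your proof is correct and follows essentially the same approach as the paper: apply the triangle inequality to reduce to the marginal tails, use the union bound together with Assumption~\ref{a4:a:a3} to get $2Ce^{-cx/2}$, and then absorb the prefactor by shrinking the exponent and choosing $x_0$ large. The paper simply fixes the specific choice $c'=c/4$, whereas you allow any $c'\in(0,c/2)$; your closing remark that independence is not actually used is accurate and a nice observation.
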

\begin{proof}
    By Assumption \ref{a4:a:a1}, $X(u)$ and $X(v)$ are independent and distributed according to $\mu.$
    We have by an union bound
    \begin{align*}
        \Proba{|X(u)-X(v)|\geq x} &\leq \Proba{|X(u)|\geq \frac{x}{2}}+\Proba{|X(v)|\geq \frac{x}{2}} \\
        &\leq 2\mu(\R\setminus [-x/2,x/2]).
    \end{align*}
    By Assumption \ref{a4:a:a3}, we may find $C>0$ $c>0$ such that $\mu(\R \setminus [-x,x])\leq Ce^{-cx},$ therefore if we set $c'=c/4>0$ we may find $x_0>0$ such that for $x\geq x_0$,
    \begin{align*}
        \Proba{|X(u)-X(v)|\geq x} \leq 2Ce^{-cx/2} \leq e^{-c'x}.
    \end{align*}
\end{proof}
In order to prove Proposition \ref{a4:prop:regime_gg_1}, we make a few definitions.
\begin{definition}
    Let $\ell>0$ and $i\in \Z$. We define the random set $M_\ell(i)$ as
    \begin{equation*}
        M_\ell(i):= \{Y_j - (j-i)\ell\ |\ j>i\}.
    \end{equation*}
    We define the event $\mathcal{T}_\ell(i)$ as
    \begin{equation*}
        \mathcal{T}_\ell(i):= \{\text{the set }M_\ell(i) \text{ admits a maximum}\}.
    \end{equation*}
    On the event $\mathcal{T}_\ell(i)$ we define a random integer $T_\ell(i)$ as
    \begin{equation*}
        T_\ell(i) := \min\{j>i\ |\ Y_j-(j-i)\ell = \max M_\ell(i)\}.
    \end{equation*}
\end{definition}
\begin{remark}
    In the definition of $T_\ell(i)$ we decided to select the minimum index $j$ that satisfies the maximum condition, this choice if just for convenience. Actually, if we also assume Assumption \ref{a4:a:a2}, then it would not be hard to check that for fixed $\ell$ then, almost surely, for all $i\in \Z$, there cannot be two indexes that realize the maximum of $M_\ell(i)$. But for our purpose, this arbitrary choice is not important. What matters is that $T_\ell(i)$ can be understood as the index of a point that generates the "highest" shadow on $i$ (at level $\ell$).
\end{remark}
\begin{claim}
Assume that Assumptions \ref{a4:a:a1} and \ref{a4:a:a3} hold.
    For any $\ell>0$ and $i\in \mathbb{Z}$, the event $\mathcal{T}_\ell(i)$ has probability $1$.
\end{claim}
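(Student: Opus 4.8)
The plan is to exploit the deterministic downward drift hidden in the definition of $M_\ell(i)$: each of its elements is $Y_j-(j-i)\ell$, where the drift $-(j-i)\ell\to-\infty$ (since $\ell>0$) while the fluctuation $Y_j=X_{(j,0)}$ has, by Assumption \ref{a4:a:a3}, exponentially small tails. So, almost surely, the elements of $M_\ell(i)$ tend to $-\infty$, only finitely many of them can compete for the supremum, and since $M_\ell(i)$ is nonempty (it contains $Y_{i+1}-\ell$) it therefore admits a maximum, i.e. $\mathcal{T}_\ell(i)$ holds. Fix $\ell>0$ and $i\in\Z$ throughout (one could reduce to $i=0$ using the horizontal stationarity of $X$, but this gives no real simplification).

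First I would fix an integer $K\geq 1$ and bound, for every $j>i$ large enough that $t_j:=(j-i)\ell-K\geq 0$,
\[
\Proba{Y_j-(j-i)\ell\geq -K}=\Proba{Y_j\geq t_j}\leq \mu\!\left(\R\setminus[-t_j,t_j]\right)\leq C e^{-c\,t_j},
\]
using that $Y_j$ has law $\mu$ together with Assumption \ref{a4:a:a3}. Since $t_j=(j-i)\ell-K$ grows linearly in $j$, the right-hand side is summable over $j>i$ (the finitely many indices with $t_j<0$ contribute at most $1$ each). By the Borel–Cantelli lemma, almost surely the event $\{Y_j-(j-i)\ell\geq -K\}$ occurs for only finitely many $j>i$; equivalently, the random set $J_K:=\{j>i\ :\ Y_j-(j-i)\ell\geq -K\}$ is finite. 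Taking the countable intersection over $K\in\N$, we get that almost surely $J_K$ is finite for every $K\in\N$, hence almost surely $Y_j-(j-i)\ell\to-\infty$ as $j\to\infty$.

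On this almost sure event, choose $K\in\N$ with $-K<Y_{i+1}-\ell$. Then $i+1\in J_K$, so $J_K$ is a finite nonempty set, and every $j>i$ with $j\notin J_K$ satisfies $Y_j-(j-i)\ell<-K<Y_{i+1}-\ell$. Consequently
\[
\max M_\ell(i)=\max_{j\in J_K}\bigl(Y_j-(j-i)\ell\bigr)
\]
exists, which is exactly the statement that $\mathcal{T}_\ell(i)$ occurs.

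The argument is essentially routine. The only point that requires a little care is that the natural reference height against which one wants to compare the elements of $M_\ell(i)$ — the first element $Y_{i+1}-\ell$ — is itself random, so one cannot apply Borel–Cantelli directly at that level; this is why I first establish convergence to $-\infty$ along the deterministic thresholds $-K$ for all $K\in\N$ simultaneously, and only afterwards pick the appropriate (random) $K$ for a given realization. I also note that exponential tails are far more than needed for the summability step — finiteness of $\E{|Y_0|}$ would already suffice — but Assumption \ref{a4:a:a3} is available and makes the estimate immediate.
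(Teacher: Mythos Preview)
Your proof is correct and is essentially the same Borel--Cantelli argument as the paper's. The only difference is cosmetic: the paper compares directly against the random reference $Y_{i+1}-\ell$ by invoking Fact~\ref{a4:claim:F} (which bounds $\Proba{|Y_j-Y_{i+1}|\geq x}$ and thus absorbs the randomness of the reference in one stroke), whereas you first pass through deterministic thresholds $-K$ and then intersect over $K\in\N$.
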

\begin{proof}
    Let $i\in \Z$. We have
    $$\Proba{(\mathcal{T}_\ell(i))^c} \leq \Proba{(Y_j -(j-i)\ell > Y_{i+1}-\ell \text{ for infinitely many }j>i}.$$ This last probability is zero by the Borel-Cantelli Lemma and using Fact \ref{a4:claim:F}.
\end{proof}
For fixed $\ell>0$, we now work on $\Omega_\ell = \bigcap_{i\in \Z}\mathcal{T}_\ell(i)$ which has full probability.
\begin{claim}
    \label{a4:claim:1}
    Let $i\in \Z$. We have $\alpha(T_\ell(i))\leq \ell$. Moreover, if $\alpha(i)>\ell$, then $i\prec_\ell T_\ell(i)$. 
\end{claim}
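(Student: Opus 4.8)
The plan is to argue entirely by direct manipulation of the inequalities defining $M_\ell(i)$, $T_\ell(i)$, $\prec_\ell$ and $\alpha$, using the equivalence \eqref{a4:eq:equiv_alpha} specialized to the horizontal line: for $i\in\Z$ one has $\alpha(i)=\sup_{j>i}\frac{Y_j-Y_i}{j-i}$, and $\alpha(i)\leq \ell$ if and only if $Y_j-(j-i)\ell\leq Y_i$ for all $j>i$. All computations take place on the full-probability event $\Omega_\ell$, so that $\max M_\ell(i)$ and hence $T_\ell(i)$ are well defined.

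First I would prove $\alpha(T_\ell(i))\leq \ell$. Write $t=T_\ell(i)$. By definition $t>i$ and $Y_t-(t-i)\ell=\max M_\ell(i)$, so for every $j>i$ we have $Y_j-(j-i)\ell\leq Y_t-(t-i)\ell$. Given any $j>t$ (hence in particular $j>i$), subtracting $(t-i)\ell$ from both sides and rearranging yields $Y_j-(j-t)\ell\leq Y_t$. As this holds for all $j>t$, the equivalence above gives $\alpha(t)\leq \ell$.

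For the second assertion, suppose $\alpha(i)>\ell$. Then there is some $j>i$ with $Y_j-Y_i>(j-i)\ell$, i.e. $Y_j-(j-i)\ell>Y_i$, which forces $\max M_\ell(i)=Y_t-(t-i)\ell>Y_i$, that is $Y_t>Y_i+(t-i)\ell$. Combined with $t>i$, this is exactly the statement $i\prec_\ell T_\ell(i)$, and we are done.

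Since the whole argument is a chain of elementary inequalities, there is no genuine obstacle here; the only point needing (minor) care is to restrict to $\Omega_\ell$ so that the maximum of $M_\ell(i)$ — and therefore $T_\ell(i)$ — exists, which is guaranteed by the preceding claim.
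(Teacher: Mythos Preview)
Your proof is correct and follows essentially the same approach as the paper's own proof: the same inequality manipulations from the maximality of $Y_{T_\ell(i)}-(T_\ell(i)-i)\ell$ in $M_\ell(i)$, with the only cosmetic difference that you argue the first assertion directly while the paper phrases it as a proof by contradiction. (A tiny slip: you should \emph{add} $(t-i)\ell$ to both sides rather than subtract it, but the resulting inequality you write is the right one.)
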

\begin{remark}
    If we keep the intuition of $T_\ell(i)$ as the mountain that creates the highest shadow on $i$. Then the first part of Fact \ref{a4:claim:1} can be interpreted by saying that the mountain creating the highest shadow on us cannot itself be in the shadow of someone else.
\end{remark}
\begin{proof}[Proof of Fact \ref{a4:claim:1}]
First, let us show that $\alpha(T_\ell(i))\leq \ell$. By contradiction, if that is not the case we would find some integer $j>T_\ell(i)$ such that $T_\ell(i)\prec_\ell j$, then
     $$Y_{T_\ell(i)}< Y_j - (j-T_\ell(i))\ell.$$
     This would imply
     $$Y_{T_\ell(i)}-(T_\ell(i)-i)\ell < Y_j -(j-i)\ell.$$
     Such an inequality contradicts the definition of $T_\ell(i)$.
     
     Now, let us show that if $\alpha(i)>\ell$ then $i\prec_\ell T_\ell(i)$. Since $\alpha(i)>\ell$ there exists $j_0>i$ such that $i\prec_\ell j_0$. This rewrites as $Y_{j_0}-(j_0-i)\ell > Y_i$. By definition of $T_\ell(i)$, we have $$Y_{T_\ell(i)}-(T_\ell(i)-i)\ell \geq Y_{j_0}-(j_0-i)\ell > Y_i.$$ Thus, $$i\prec_\ell T_\ell(i).$$
\end{proof}

\begin{claim}
    \label{a4:claim:2}
    Let $i<j$ be two points of $\Z$ such that $\alpha(i)>\ell$ and $\alpha(j)>\ell$. If $i\not\prec_\ell T_\ell(j)$ then $i<T_\ell(i)<j$.
\end{claim}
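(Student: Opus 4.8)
The plan is to argue by contradiction, ruling out the possibility $T_\ell(i)\geq j$; combined with the trivial bound $T_\ell(i)>i$ this gives exactly $i<T_\ell(i)<j$. The bound $T_\ell(i)>i$ comes for free from Fact \ref{a4:claim:1}: since $\alpha(i)>\ell$ we have $i\prec_\ell T_\ell(i)$, which in particular forces $i<T_\ell(i)$. We work throughout on $\Omega_\ell$, so all the maxima $\max M_\ell(\cdot)$ and the indices $T_\ell(\cdot)$ are well defined.

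First I would unfold the three pieces of information available as inequalities between the $Y$'s. From $\alpha(i)>\ell$ and Fact \ref{a4:claim:1}: $Y_{T_\ell(i)}-(T_\ell(i)-i)\ell>Y_i$. From $\alpha(j)>\ell$ and Fact \ref{a4:claim:1}: $j\prec_\ell T_\ell(j)$, i.e. $Y_{T_\ell(j)}-(T_\ell(j)-j)\ell>Y_j$. Finally, since $i<j<T_\ell(j)$, the hypothesis $i\not\prec_\ell T_\ell(j)$ simply says $Y_{T_\ell(j)}-(T_\ell(j)-i)\ell\leq Y_i$; so to reach a contradiction it suffices to prove $i\prec_\ell T_\ell(j)$ under the assumption $T_\ell(i)\geq j$.

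I would then split into two cases. If $T_\ell(i)=j$, the first inequality above reads $Y_j-(j-i)\ell>Y_i$, that is $i\prec_\ell j$; combining with $j\prec_\ell T_\ell(j)$ and the transitivity of $\prec_\ell$ (the remark following the definition of $\prec_\ell$) yields $i\prec_\ell T_\ell(j)$, the desired contradiction. If instead $T_\ell(i)>j$, then $T_\ell(i)$ is an admissible index in the definition of $\max M_\ell(j)$, so by maximality $Y_{T_\ell(j)}-(T_\ell(j)-j)\ell\geq Y_{T_\ell(i)}-(T_\ell(i)-j)\ell$; subtracting $(j-i)\ell$ from both sides and inserting $i\prec_\ell T_\ell(i)$ gives the chain
$$Y_{T_\ell(j)}-(T_\ell(j)-i)\ell\ \geq\ Y_{T_\ell(i)}-(T_\ell(i)-i)\ell\ >\ Y_i,$$
i.e. again $i\prec_\ell T_\ell(j)$. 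In both cases we contradict $i\not\prec_\ell T_\ell(j)$, so $T_\ell(i)<j$, completing the proof.

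The argument is essentially bookkeeping with the defining inequalities and is short. The only point requiring a little care — and the only place where the two-case split is forced — is the boundary case $T_\ell(i)=j$: there the maximality of $T_\ell(j)$ cannot be invoked, since $T_\ell(j)$ ranges only over indices strictly greater than $j$, and one must instead route through transitivity of $\prec_\ell$ together with Fact \ref{a4:claim:1} applied at $j$ (which is where the hypothesis $\alpha(j)>\ell$ is used). I expect no further obstacle.
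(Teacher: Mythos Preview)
Your proof is correct and follows essentially the same contradiction argument as the paper: assume $T_\ell(i)\geq j$, use the maximality defining $T_\ell(j)$ together with $i\prec_\ell T_\ell(i)$ to force $i\prec_\ell T_\ell(j)$. The only difference is in the boundary case $T_\ell(i)=j$: you route through transitivity of $\prec_\ell$, whereas the paper dispatches it more directly by observing that $\alpha(T_\ell(i))\leq\ell$ (first part of Fact~\ref{a4:claim:1}) while $\alpha(j)>\ell$, so $T_\ell(i)\neq j$ outright --- so your case split, while valid, is not actually forced.
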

\begin{remark}
We may interpret Fact \ref{a4:claim:2} the following way: If two points $i<j$ are in the shadow but if $i$ is not in the shadow of the principal responsible for the shadow on $j$, then the principal responsible for the shadow on $i$ lies between $i$ and $j$.
\end{remark}
\begin{proof}
    First, we always have $T_\ell(i)>i$ (by definition).\\
    By contradiction, if $T_\ell(i)>j$, then by definition of $T_\ell(j)$ we have
    $$Y_{T_\ell(j)}-(T_\ell(j)-j)\ell \geq Y_{T_\ell(i)}-(T_\ell(i)-j)\ell.$$
    By substracting $(j-i)\ell$ from both sides we find $$Y_{T_\ell(j)}-(T_\ell(j)-i)\ell \geq Y_{T_\ell(i)} - (T_\ell(i)-i)\ell.$$
    However, since $i\prec_\ell T_\ell(i)$ (by Fact \ref{a4:claim:1}) and since $i\not\prec_\ell T_\ell(j)$ (par assumption), we have
    $$Y_i \geq Y_{T_\ell(j)}-(T_\ell(j)-i)\ell \geq Y_{T_\ell(i)} - (T_\ell(i)-i)\ell > Y_i,$$
    which is a contradiction.\\
    It only remains to argue that $T_\ell(i)\neq j$. In fact, by assumption we have $\alpha(j)>\ell$ but by Fact \ref{a4:claim:1} we have $\alpha(T_\ell(i))\leq \ell$ which concludes the proof.
\end{proof}

We now provide the proof of Proposition \ref{a4:prop:regime_gg_1}.
\begin{proof}[Proof of Proposition \ref{a4:prop:regime_gg_1}]
For convenience, we see $\Z$ as a subset of $\Z^2$ by the natural inclusion $n\mapsto (n,0).$
We first do the proof for a finite subset of the form $A\subset  \Z$ (that is, we must understand $A$ as a subset of $\Z\times \{0\}$).
    If $A=\emptyset$, there is nothing to prove. In the following we suppose $|A|\geq 1$, and we note $A=\{a_1,a_2,\dots,a_n\}$ with $a_1<a_2<\dots<a_n$ (thus $n=|A|$). First, on the event
    \begin{equation}
        \{\forall i\in A\ |\ \alpha(i)>\ell\}.
    \end{equation}
    we will define and integer $r\geq 0$ and a decreasing sequence of integers $(k_i)_{0\leq i \leq r}$ such that $n=k_0>k_1>\dots>k_r\geq 1$.
    We build this sequence by induction.
    We first set $k_0=n$. Let us assume that we are a step $r'\geq 0$, $(k_m)_{0\leq m\leq r'}$ was built and the construction is not over. If for every $l\in \llbracket 1, k_{r'}-1\rrbracket$ we have $a_l \prec_\ell T_\ell(a_{k_r'})$ then we end the construction (and $r=r'$). Otherwise, let $k_{r'+1}$ be the biggest integer strictly smaller than $k_{r'}$ such that $a_{k_{r'+1}}\not\prec_\ell T_\ell(a_{k_{r'}})$. We add $k_{r'+1}$ to our sequence, and we then reproduce this procedure until it ends.\\\\
    since $A$ is finite, the procedure eventually ends. Moreover, Facts \ref{a4:claim:1} and \ref{a4:claim:2} imply that on the event $\{\forall i \in A\ |\ \alpha(i)>\ell\}$ then the sequence $(k_i)_{0\leq i \leq r}$ has the following properties:
    \begin{enumerate}
        \item $\forall l\in \llbracket 1, k_r\rrbracket,\ a_l \prec_\ell T_\ell(a_{k_r})$
        \item $\forall m\in \llbracket 1,r\rrbracket,\ \forall l\in \llbracket k_m+1, k_{m-1}\rrbracket,\ a_l \prec_\ell T_\ell(a_{k_{m-1}}).$
        \item $\forall m\in \llbracket 1, r\rrbracket,\  a_{k_m} \not\prec_\ell T_\ell(a_{k_{m-1}})$
        \item $a_{k_r}<T_\ell(a_{k_r})<a_{k_{r-1}}<T_\ell(a_{k_{r-1}}) <\dots < a_{k_1}<T_\ell(a_{k_1}) < a_{k_0}=a_n < T_\ell(a_{k_0})=T_\ell(a_n)$
    \end{enumerate}
    Note that in the case $r=0$, then $k_r=k_0=n$ and the second and third property are empty. The first property comes from an application of Fact \ref{a4:claim:1} for $l=k_r$ and of our ending condition for the algorithmic procedure. The second property also comes from Fact \ref{a4:claim:1} for $l={k_{m-1}}$ together with the definition of the induction. The third property is a consequence of the definition of the sequence $(k_i)_{0\leq i \leq r}$.  The last property follows from Fact \ref{a4:claim:2} and the third property.

    Now that we have built this sequence $(k_i)_{0\leq i \leq r}$ we introduce a few more notations.
    For $0\leq m\leq r$, we denote by $j_m$ the index
    $$j_m:= T_\ell(a_{k_m}).$$
    We now define the set $J$ as
    \begin{equation}
            J=\{j_r, \dots, j_0\}.
    \end{equation}
    One may think of $J$ as a minimal set of points which is enough to cast shade to all points in $A$.
    We observe that $J\cap A=\emptyset$. Indeed, the elements $a\in A$ satisfy $\alpha(a)>\ell$ whereas, by Fact \ref{a4:claim:1} the elements of $j\in J$ satisfy $\alpha(j)\leq \ell$.
    Finally we observe that $|J|$ can vary between $|J|=1$ (when only point is enough to cast shade onto all points in $A$) and $|J|=n=|A|$ (when all $T_\ell(a_i)$ are different). Therefore, we write
    \begin{equation}
        \mathbb{P}(\{\forall i\in A, \alpha(i)>\ell\}) = \sum_{r=0}^{n-1} \mathbb{P}(\{\forall i\in A, \alpha(i)>\ell\}\cap \{|J|=r+1\}).
    \end{equation}
    For $0\leq r\leq n-1$, we denote by $E_r$ the event
    $$E_r:= \{\forall i\in A,\ \alpha(i)>\ell\}\cap \{|J|=r+1\}.$$
    Consider $c',x_0$ given by Fact \ref{a4:claim:F}.
    We work with $\ell> x_0$, we denote by $\rho := e^{-c'\ell}\in ]0,1[$ and we observe that if $i<j$ then $$\Proba{i\prec_\ell j}\leq \rho^{j-i}.$$
    For $r=0$ we find
    \begin{align*}
        \Proba{E_0}& \leq \sum_{j_0=a_n+1}^\infty\mathbb{P}(a_1\prec_\ell j_0) \\
        &\leq \sum_{j_0=a_n+1}^\infty \rho^{j_0-a_1} \\
        &\leq \frac{\rho^{a_n+1-a_1}}{1-\rho} \\
        &\leq \frac{\rho^n}{1-\rho}
    \end{align*}
    In the first line, we use the fact that $J=\{j_0\}$ for some $j_0>a_n$ and we do an union bound on the possible values for $j_0$ (we also only keep the constraint $a_1\prec_\ell j_0$).
    From the first, to second line, we apply Fact \ref{a4:claim:F} and we use the independence of $Y_{a_1}$ and $Y_{j_0}$. In the last two lines, we used the fact that $\rho=e^{-c'\ell}$ is between $0$ and $1$ and the fact that $a_n-a_1\geq n-1.$
    
    Here is a similar (although more technical) computation for $r\geq 1$.
    \begin{align*}
        \mathbb{P}(E_r) &\leq 
\sum_{j_0=a_n+1}^\infty
\sum_{(k_i)_{0\leq i \leq r}}
\sum_{\substack{j_m, 1\leq m\leq r\\
  a_{k_m}<j_m <a_{k_{m-1}}\\ j_m \not\in A}}
\mathbb{P}\left(
  \begin{array}{c}
    a_1 \prec_\ell j_r \text{ and } \\
    a_{k_m+1}\prec_\ell j_{m-1}\ \forall m\in \llbracket 1,r\rrbracket
  \end{array}
\right).
\end{align*}
        where the second sum is over all sequences $(k_i)_{0\leq i \leq r}$ such that $1\leq k_r<k_{r-1}<\dots < k_1< k_0=n$ (note that there are $\binom{n-1}{r}$ terms in this sum), and the third sum is over all $(j_1,\dots, j_m)$ such that $a_{k_m}<j_m<a_{k_{m-1}}$ and $j_m\not\in A$ for all $m$. (note that we can sum over $j_m\not\in A$ because $J\cap A=\emptyset$). The events $a_{k_{m}+1}\prec_\ell j_{m-1}$ and $a_1\prec_\ell j_r$ are all independent (this stems from the fact that $J\cap A=\emptyset$ and from the independence of the $(Y_i)_{i\in \Z}$). This allows us to compute an upper bound for $\Proba{E_r}$.
        \begin{align*}
        \mathbb{P}(E_r)\leq& \sum_{j_0=a_n+1}^\infty\sum_{(k_i)_{0\leq i \leq r}}\sum_{\substack{j_m, 1\leq m\leq r\\
        a_{k_m}<j_m <a_{k_{m-1}}\\ j_m\not\in A}}\rho^{j_r-a_1}\prod_{m=1}^r \rho^{j_{m-1}-a_{k_m+1}}\\
        \leq &\sum_{j_0=a_n+1}^\infty \rho^{j_0-a_n}\sum_{(k_i)_{0\leq i \leq r}}\rho^{a_{n}-a_{k_1+1}}\left(\sum_{j_r=a_{k_r}+1}^\infty \rho^{j_r-a_1}\right) \\
        & \times \prod_{m=2}^r \left(\sum_{j_{m-1}=a_{k_{m-1}}+1}^\infty \rho^{j_{m-1}-a_{k_m+1}}\right) \\
        =& \sum_{j_0=a_n+1}^\infty \rho^{j_0-a_n}\sum_{(k_i)_{0\leq i \leq r}}\rho^{a_{n}-a_{k_1+1}}\frac{\rho^{a_{k_r}+1-a_1}}{1-\rho}\prod_{m=2}^r \frac{\rho^{a_{k_{m-1}}+1-a_{k_m+1}}}{1-\rho} \\
        \leq & \sum_{j_0=a_n+1}^\infty \rho^{j_0-a_n}\sum_{(k_i)_{0\leq i \leq r}}\rho^{n-k_1-1}\frac{\rho^{k_r+1-1}}{1-\rho}\prod_{m=2}^r \frac{\rho^{k_{m-1}+1-k_m-1}}{1-\rho}\\
       = &\sum_{j_0=a_n+1}^\infty \rho^{j_0-a_n}\sum_{(k_i)_{0\leq i \leq r}} \frac{\rho^{n-1}}{(1-\rho)^r} \\
       = &\binom{n-1}{r}\frac{\rho^n}{(1-\rho)^{r+1}}.
        \end{align*}
    Summing all these inequalities for $r$ going from $0$ to $n-1$ we get
    \begin{align*}
        \mathbb{P}(\forall i\in A, \alpha(i)>\ell) &\leq \sum_{r=0}^{n-1}\mathbb{P}(E_r) \\
        &\leq \sum_{r=0}^{n-1}\binom{n-1}{r}\frac{\rho^n}{(1-\rho)^{r+1}}\\
        &= \frac{\rho^n}{1-\rho}\left(\frac{2-\rho}{1-\rho}\right)^{n-1}\\
        &\leq \left(\frac{\rho(2-\rho)}{1-\rho}\right)^n.
    \end{align*}
Now, let $\varepsilon>0$. We choose $\ell\geq x_0$ big enough so that $\rho=e^{-c'\ell}$ is small enough so that $\left(\frac{\rho(2-\rho)}{1-\rho}\right) <\varepsilon.$
This concludes the proof of Proposition \ref{a4:prop:regime_gg_1} for $A\subset \Z\times \{0\}$. To deduce the result for $A\subset \Z^2$, observe that any finite set $A\subset \Z^2$ can be written as $A=\sqcup_{i\in \Z} A_i$ where $A_i$ is a subset of $\Z\times \{i\}$ (in fact $A_i = A\cap (\Z\times \{i\})$. Since the collections $(\alpha(u))_{u\in \Z\times \{i\}}$ are mutually independent and identically distributed when $i$ ranges over $\Z$ we get
\begin{align*}
    \Proba{\forall u\in A,\ \alpha(u)>\ell} &=\Proba{\forall i\in \Z,\forall u\in A_i,\ \alpha(u)>\ell}\\
    &=\prod_{i\in \Z}\Proba{\forall u \in A_i,\ \alpha(u)>\ell} \\
    &\leq \prod_{i\in \Z} \varepsilon^{|A_i|} \\ &= \varepsilon^{\sum_{i\in \Z}|A_i|}\\&=\varepsilon^{|A|}.
\end{align*}
This concludes the proof of Proposition \ref{a4:prop:regime_gg_1}.
\end{proof}

\section{The regime of small $\ell$}
\label{a4:sec:3}
We now focus our attention on the behavior of our system when $\ell$ is a small positive number. The main result of this section is Proposition \ref{a4:prop:regime_ll_1} which is the analogue of Proposition \ref{a4:prop:regime_gg_1} for the regime of small positive $\ell$.
\begin{proposition}
\label{a4:prop:regime_ll_1}
Let $X : \Z^2 \to \R$ and $\mu$ be such that Assumption \ref{a4:a:a1} holds. Assume that $\mu$ satisfies Assumption \ref{a4:a:a2}.
Let $\varepsilon>0$. There exists $\ell_0(\varepsilon)>0$ such that, for any $\ell\in [0,\ell_0(\varepsilon)]$, for any finite subset $A\subset \Z^2$ we have
\begin{equation}
    \Proba{\forall u\in A,\ \alpha(u)\leq \ell}\leq \varepsilon^{|A|}.
\end{equation}
\end{proposition}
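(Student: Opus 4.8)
The plan is to mimic the two reductions in the proof of Proposition~\ref{a4:prop:regime_gg_1}. Since the restrictions of $\alpha$ to distinct lines $\Z\times\{n\}$ are i.i.d., it suffices to treat a finite $A\subset\Z\times\{0\}$; writing $Y_i:=X_{(i,0)}$, the $Y_i$ are i.i.d.\ with the non-atomic law $\mu$. Setting $Z_i:=Y_i-i\ell$, one reads directly from \eqref{a4:eq:def_alpha_general} that
\[
\alpha(i)\leq\ell\iff Z_{i+r}\leq Z_i\ \text{ for all }r\in\N^*,
\]
i.e.\ $\alpha(i)\le\ell$ says that $i$ is a right-to-left record of the sequence $(Z_j)_j$. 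At $\ell=0$ the $Z_j$ are i.i.d., and if $a_1<\dots<a_n$ all satisfy $\alpha(a_k)\le 0$ then $Z_{a_1}\ge Z_{a_2}\ge\dots\ge Z_{a_n}$, an event of probability $1/n!$ — this is the heuristic of the introduction. The real difficulty is to make the bound robust for small $\ell>0$ and, above all, uniform in $A$.

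Fix $\varepsilon>0$ and pick an integer $m=m(\varepsilon)$ with $m+1\ge 2/\varepsilon$ (so that $\tfrac{N!}{(N+c)!}\le\tfrac{1}{(m+1)^c}\le(\varepsilon/2)^c$ for all $c\ge1$, with $N:=m$) and with $1/m!<\varepsilon^{2m}$ (possible since $(m!)^{1/(2m)}\to\infty$). Given $A=\{a_1<\dots<a_n\}\subset\Z$, split it into \emph{clusters}: maximal runs of consecutive $a_i$'s with successive gaps $\le N$. Call them $\mathcal C_1,\dots,\mathcal C_p$ with $n_t:=|\mathcal C_t|$, $\sum_t n_t=n$; consecutive clusters are then separated by a gap $>N$ in $\Z$. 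Attach to $\mathcal C_t$ the window $W_t:=[\min\mathcal C_t,\ \max\mathcal C_t+N]\cap\Z$; the $W_t$ are pairwise disjoint. If every $u\in A$ satisfies $\alpha(u)\le\ell$, then in particular, for each $u\in\mathcal C_t$ and each $j\in W_t$ with $j>u$ one has $Z_j\le Z_u$, i.e.\ the event
\[
E_t:=\bigl\{\,Z_j\le Z_u\ \text{ for all }u\in\mathcal C_t,\ j\in W_t,\ j>u\,\bigr\}
\]
holds. Since $E_t$ is measurable with respect to $(Y_k)_{k\in W_t}$, the $E_t$ are independent and $\Proba{\forall u\in A,\ \alpha(u)\le\ell}\le\prod_{t=1}^p\Proba{E_t}$. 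It then remains to prove $\Proba{E_t}\le\varepsilon^{n_t}$ for every cluster, uniformly in small $\ell$.

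Write $\mathcal C_t=\{b_1<\dots<b_c\}$, $c=n_t$. Then $E_t$ forces both $Z_{b_1}\ge\dots\ge Z_{b_c}$ and $Z_{b_c}\ge\max(Z_{b_c+1},\dots,Z_{b_c+N})$. If $c<m$, I would bound $\Proba{E_t}$ by the probability of this event; as $\ell\to0$ the $Z$'s converge to the (a.s.\ distinct) $Y$'s, so it tends to $\Proba{Y_{b_1}>\dots>Y_{b_c}>\max(Y_{b_c+1},\dots,Y_{b_c+N})}=\tfrac{N!}{(N+c)!}\le(\varepsilon/2)^c$; as there are only finitely many cluster shapes for each $c<m$ (up to translation), a single $\ell_0^{(1)}>0$ then gives $\Proba{E_t}\le\varepsilon^{c}$ for all of them. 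If $c\ge m$, I would not let $\ell\to0$ with the whole cluster (the drifts inside $Z$ grow like $cN\ell$); instead I would cut the first $m\lfloor c/m\rfloor$ points of $\mathcal C_t$ into $\lfloor c/m\rfloor$ consecutive length-$m$ blocks and keep from $E_t$ only the within-block decreases $Z_{b_{(s-1)m+1}}\ge\dots\ge Z_{b_{sm}}$. These depend on disjoint sets of $Y$'s, hence are independent, and each equals $\Proba{Y_{b_{i+1}}-Y_{b_i}\le(b_{i+1}-b_i)\ell,\ i\text{ in the block}}\le h_m(m\ell)$, where $h_m(\eta):=\Proba{W_{i+1}-W_i\le\eta,\ i=1,\dots,m-1}$ for i.i.d.\ $W_1,\dots,W_m\sim\mu$ (using that every internal gap is $\le N=m$). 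Since $h_m(\eta)\downarrow\Proba{W_1\ge\dots\ge W_m}=1/m!<\varepsilon^{2m}$ as $\eta\downarrow0$, there is $\ell_0^{(2)}>0$ with $h_m(m\ell)\le\varepsilon^{2m}$ for $\ell<\ell_0^{(2)}$, whence $\Proba{E_t}\le\varepsilon^{2m\lfloor c/m\rfloor}\le\varepsilon^{c}$ (as $2m\lfloor c/m\rfloor\ge c$ when $c\ge m$). Taking $\ell_0:=\min(\ell_0^{(1)},\ell_0^{(2)})$ and multiplying over clusters yields $\Proba{\forall u\in A,\alpha(u)\le\ell}\le\varepsilon^{n}$; the passage from $A\subset\Z\times\{0\}$ to a general finite $A\subset\Z^2$ is word for word the end of the proof of Proposition~\ref{a4:prop:regime_gg_1}.

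I expect the genuine difficulty to be exactly this uniformity in $\ell$ for large clusters: one cannot simply say that for $\ell$ small $\{\alpha(a_k)\le\ell\}$ behaves like $\{\alpha(a_k)\le 0\}$, because the relevant slacks $(a_l-a_k)\ell$ are controlled only by the cluster width, which is unbounded. The device above — discarding all constraints but the within-block monotonicities, whose slacks are $\le m\ell$ once $m$ is fixed, and compensating by a bookkeeping ($\varepsilon^{2m}$ per block of $m$ points) so that the final exponent is still $|A|$ — is what resolves it; the small-cluster case, having only finitely many shapes, is then harmless. The only property of $\mu$ used is that it is non-atomic (a consequence of Assumption~\ref{a4:a:a2}): it enters through the $1/k!$ probabilities and through $h_m(\eta)\downarrow 1/m!$. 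No assumption on the tails of $\mu$ is needed — if $\mu$ has heavy tails the left-hand side is simply $0$.
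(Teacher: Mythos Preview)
Your proof is correct and follows essentially the same route as the paper: truncate to a finite window (the paper does this by introducing the field $\alpha_{R_0}$), decompose $A$ into $R_0$-connected clusters as in Lemma~\ref{a4:lemma:decomp_R0_connected}, then handle small clusters by a one-shot exchangeability estimate and large clusters by cutting into fixed-length blocks whose ordering probability is close to $1/m!$ (the paper's Lemma~\ref{a4:lemma:2_1}). The only cosmetic differences are that you merge the paper's two parameters $n_0,R_0$ into a single $m=N$, and your small-cluster estimate uses the ranking bound $N!/(N+c)!$ on $c+N$ values where the paper uses the single-point bound $\Proba{\alpha_{R_0}(i_0)\le\ell}\approx 1/(R_0+1)$ of Lemma~\ref{a4:lemma:2_2}.
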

Here is an immediate corollary of Proposition \ref{a4:prop:regime_ll_1}.
\begin{corollary}
\label{a4:cor:2}
Let $X : \Z^2 \to \R$ and $\mu$ be such that Assumption \ref{a4:a:a1} holds. Assume that $\mu$ satisfies Assumption \ref{a4:a:a2}.
Then, $\ell_c^\leq>0$ and $\ell_c^\geq>0.$
\end{corollary}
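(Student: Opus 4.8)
The plan is to derive Corollary \ref{a4:cor:2} from Proposition \ref{a4:prop:regime_ll_1} by the same Peierls-type argument already used in the proof of Corollary \ref{a4:cor:discrete_indep}, with the roles of $\{\alpha \leq \ell\}$ and $\{\alpha \geq \ell\}$ interchanged. Fix $\varepsilon>0$ small (how small to be determined at the end) and let $\ell_0=\ell_0(\varepsilon)>0$ be the threshold furnished by Proposition \ref{a4:prop:regime_ll_1}, so that for every $\ell \in [0,\ell_0]$ and every finite $A\subset \Z^2$ one has $\Proba{\forall u\in A,\ \alpha(u)\leq \ell}\leq \varepsilon^{|A|}$.

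First I would show $\ell_c^\leq>0$. For any $\ell\le \ell_0$, if $0 \overset{\{\alpha\leq \ell\}}{\longleftrightarrow}\infty$ then for each $n$ there is a non-self-intersecting orthogonally connected path of length $n$ starting at $0$ all of whose vertices $u$ satisfy $\alpha(u)\le \ell$. There are at most $4^n$ such paths, and by Proposition \ref{a4:prop:regime_ll_1} each has probability at most $\varepsilon^n$ of lying entirely in $\{\alpha\le \ell\}$; a union bound gives $\Theta_{\leq\ell}\le (4\varepsilon)^n$ for all $n$. Choosing $\varepsilon<\tfrac14$ and letting $n\to\infty$ yields $\Theta_{\leq \ell}=0$ for all $\ell\in[0,\ell_0]$, hence $\ell_c^\leq \ge \ell_0 > 0$.

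Next I would show $\ell_c^\geq>0$, equivalently that $\Theta_{\geq \ell}>0$, i.e. $\Proba{0\overset{\{\alpha\ge \ell\}}{\longleftrightarrow}\infty}>0$, for $\ell$ small. If $0$ is not connected to infinity in $\{\alpha\ge \ell\}$ then there is a non-self-intersecting $\star$-connected circuit surrounding $0$, of some length $n$, all of whose vertices $u$ satisfy $\alpha(u)<\ell$ (here I use the standard planar duality that an orthogonal cluster is finite iff it is enclosed by a $\star$-connected circuit of the complement, exactly as in the proof of Corollary \ref{a4:cor:discrete_indep}). The number of such circuits of length $n$ is at most $n\,8^n$, and each lies in $\{\alpha\le \ell\}$ with probability at most $\varepsilon^n$ by Proposition \ref{a4:prop:regime_ll_1}. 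Hence
\begin{equation*}
1-\Theta_{\geq \ell}\ \le\ \sum_{n\ge 1} n\,8^n\varepsilon^n,
\end{equation*}
and for $\varepsilon$ small enough the right-hand side is finite and strictly less than $1$, so $\Theta_{\geq \ell}>0$ for every $\ell\in[0,\ell_0]$, giving $\ell_c^\geq\ge \ell_0>0$. Finally, shrinking $\varepsilon$ once at the outset so that simultaneously $4\varepsilon<1$ and $\sum_{n\ge1}n\,8^n\varepsilon^n<1$, and taking the associated $\ell_0$, both conclusions hold.

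Since this argument is purely combinatorial and identical in structure to the already-presented proof of Corollary \ref{a4:cor:discrete_indep}, there is no real obstacle; the only point requiring a word of care is the planar duality statement used in the $\ell_c^\geq$ part — namely that finiteness of the orthogonal cluster of $0$ forces the existence of a surrounding $\star$-circuit in the complement — but this is the same standard fact invoked earlier and needs no new input.
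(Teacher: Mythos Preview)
Your proposal is correct and follows exactly the approach the paper takes: the paper's proof simply states that the argument is ``completely similar to the proof of Corollary \ref{a4:cor:discrete_indep} (using Proposition \ref{a4:prop:regime_ll_1} in place of Proposition \ref{a4:prop:regime_gg_1})'', and you have written out precisely that Peierls argument with the roles of $\{\alpha\le\ell\}$ and $\{\alpha\ge\ell\}$ swapped.
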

\begin{proof}
    The proof is completely similar to the proof of Corollary \ref{a4:cor:discrete_indep} (using Proposition \ref{a4:prop:regime_ll_1} in place of Proposition \ref{a4:prop:regime_gg_1}).
\end{proof}
We comment that we now can prove Theorem \ref{a4:thm:principal}.
\begin{proof}[Proof of Theorem \ref{a4:thm:principal}]
    Theorem \ref{a4:thm:principal} directly follows from Corollaries \ref{a4:cor:discrete_indep} and \ref{a4:cor:2}.
\end{proof}
The rest of this section is devoted to the proof of Proposition \ref{a4:prop:regime_ll_1}. As previously we start working on the one dimensional system, we will write $Y_i=X_{i,0}$ for $i\in \Z$. We introduce the following notation.
\begin{definition} Let $h \in \R$. Let $x,y\in \R$. We write 
    $x>_h y$ for $x+h>y$. We will also write $x_1>_h x_2 >_h x_3 >_h \dots$ to mean $x_1+h>x_2$, $x_2+h>x_3$ etc.
\end{definition}
The next lemma is the key ingredient to the proof of Proposition \ref{a4:prop:regime_ll_1}.
\begin{lemma}
    \label{a4:lemma:2_1}
    Let $X : \Z^2 \to \R$ and $\mu$ be such that Assumptions \ref{a4:a:a1} and \ref{a4:a:a2} hold.
    Let $\varepsilon>0$. There exist $n_0\in \N^*$ and $h_0>0$ (depending on $\varepsilon$) such that
    \begin{equation*}
        \forall h\leq h_0,\ \mathbb{P}(Y_1>_h Y_2>\dots >_h Y_{n_0})\leq \varepsilon^{2n_0}.
    \end{equation*}
\end{lemma}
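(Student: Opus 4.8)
The plan is to reduce the statement to the degenerate slope $h=0$, for which the event is exactly a strictly decreasing chain whose probability can be computed by exchangeability, and then to transfer the bound to all sufficiently small $h$ by a monotonicity/continuity-from-above argument. There is no deep difficulty here; the only role of Assumption \ref{a4:a:a2} is to rule out ties among the $Y_i$.

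Concretely, fix $n_0\in\N^*$ and set
\[
    p(h):=\Proba{Y_1 >_h Y_2 >_h \dots >_h Y_{n_0}},
\]
which by definition of $>_h$ equals the probability that $Y_{i+1}<Y_i+h$ for all $1\le i\le n_0-1$. Each constraint $\{Y_{i+1}<Y_i+h\}$ grows with $h$, so $h\mapsto p(h)$ is non-decreasing; in particular the case $h<0$ is dominated by the case $h=0$, and it suffices to treat $h\in[0,h_0]$. Next one checks that
\[
    \bigcap_{h>0}\{Y_1 >_h \dots >_h Y_{n_0}\}=\{Y_1\ge Y_2\ge\dots\ge Y_{n_0}\},
\]
since $Y_{i+1}<Y_i+h$ for every $h>0$ is the same as $Y_{i+1}\le Y_i$. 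Applying continuity from above of $\Proba{\cdot}$ along $h=1/k$ and using the monotonicity of $p$, one gets $\lim_{h\downarrow 0}p(h)=\Proba{Y_1\ge Y_2\ge\dots\ge Y_{n_0}}$. By Assumption \ref{a4:a:a2} the law $\mu$ has a density, so $\Proba{Y_i=Y_j}=0$ for $i\neq j$ and almost surely the $Y_i$ are pairwise distinct; since the $Y_i$ are i.i.d.\ they are exchangeable, the $n_0!$ events $\{Y_{\sigma(1)}>\dots>Y_{\sigma(n_0)}\}$ indexed by permutations $\sigma$ all have the same probability and partition $\Omega$ up to a null set, whence $\lim_{h\downarrow 0}p(h)=\Proba{Y_1>\dots>Y_{n_0}}=\tfrac{1}{n_0!}$.

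It remains to choose $n_0$ so that $\tfrac{1}{n_0!}<\varepsilon^{2n_0}$; this is possible because, by Stirling's bound $n_0!\ge (n_0/e)^{n_0}$, one has $n_0!\,\varepsilon^{2n_0}\ge (n_0\varepsilon^2/e)^{n_0}\to+\infty$ as $n_0\to\infty$ (we may of course assume $\varepsilon<1$). With this $n_0$ fixed, the relation $p(h)\to\tfrac{1}{n_0!}<\varepsilon^{2n_0}$ as $h\downarrow 0$ yields some $h_0>0$ with $p(h_0)\le\varepsilon^{2n_0}$, and by monotonicity $p(h)\le p(h_0)\le\varepsilon^{2n_0}$ for every $h\le h_0$, which is the claim. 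The main (and only) point needing a little care is the exchangeability evaluation $\Proba{Y_1>\dots>Y_{n_0}}=\tfrac{1}{n_0!}$ together with the identification of the decreasing intersection of events with the weak chain; conceptually the lemma just records that a long strictly decreasing run is super-exponentially rare, so that a sufficiently small perturbation $h$ of the ordering relation does not spoil this once $n_0$ is large.
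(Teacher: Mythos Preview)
Your proof is correct and follows essentially the same approach as the paper: compute $\Proba{Y_1>\dots>Y_{n_0}}=1/n_0!$ by exchangeability (using the density assumption to rule out ties), pick $n_0$ large enough that $1/n_0!<\varepsilon^{2n_0}$, and then pass to small $h>0$ by continuity. The only cosmetic difference is that you obtain this continuity via monotonicity of $h\mapsto p(h)$ and continuity from above of $\mathbb{P}$, whereas the paper writes $p(h)$ as an integral and invokes dominated convergence; both arguments are equally valid and equally short.
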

\begin{proof}
    The key observation is that for any $n_0\geq 1$ we have
    \begin{equation}
        \label{a4:eq:key_factorial}
        \mathbb{P}(Y_1>Y_2>\dots >Y_{n_0})=\frac{1}{n_0!}.
    \end{equation}
    Indeed, \eqref{a4:eq:key_factorial} is due to the fact that the probability of having a specific order for $n_0$ independent random variable with density does not depend on this specific order (we also implicitly used the fact that almost surely, two of the $Y_i$ cannot be the same).

    Let $n_0$ be such that
    $$\frac{1}{n_0!}\leq \frac{\varepsilon^{2n_0}}{2}.$$
    For this specific $n_0$ which is fixed (and which depends on $\varepsilon$) we observe that
    \begin{equation}
        \label{a4:eq:key_factorial_2}
        \mathbb{P}(Y_1>_h Y_2>\dots >_h Y_{n_0}) \xrightarrow[h\to 0]{}  \mathbb{P}(Y_1> Y_2>\dots > Y_{n_0}) = \frac{1}{n_0!}
    \end{equation}
    In fact, \eqref{a4:eq:key_factorial_2} comes from the continuity in $h$ of the left hand side (which can be seen by writing this probability as an integral and using Assumption \ref{a4:a:a2}).
    
    Therefore, we may find $h_0>0$ depending on $\varepsilon$ such that
    \begin{equation}
        \forall h\leq h_0, \ \mathbb{P}(Y_1>_h Y_2>\dots >_h Y_{n_0}) < \varepsilon^{2n_0}.
    \end{equation}
    This is precisely the conclusion.
\end{proof}
\begin{definition}
    Let $R\geq 1$. We introduce the field $\alpha_R$ as a truncated version of the field $\alpha$, that is
    $$\forall u \in \Z^2,\ \alpha_R(u) := \max_{r\in \llbracket 1 , R\rrbracket} \frac{X_{u+re_1}-X_u}{r}.$$
\end{definition}
\begin{remark}
Note that the difference between $\alpha_R$ and $\alpha$ is that instead of taking the supremum over $r\in \mathbb{N}^*$ for $\alpha$ we restrict ourselves to $r\in \llbracket 1, R\rrbracket$ for $\alpha_R$. As such, we may understand $\alpha_R$ as a variant of $\alpha$ where we only consider the shadows coming from the $R$ first points ahead of us. A consequence of the definition is that, if $\alpha(u)\leq \ell$ then $\alpha_R(u)\leq \ell$ (for any $u,\ell,R$).
\end{remark}
\begin{lemma}
    \label{a4:lemma:2_2}
    Let $\eta>0$. There exist $R_0\geq 1$ and $\ell_0>0$ such that
    \begin{equation}
        \forall u\in \Z^2,\ \forall \ell\in [0,\ell_0],\ \Proba{\alpha_{R_0}(u)\leq \ell}\leq \eta
    \end{equation}
\end{lemma}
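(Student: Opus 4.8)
The plan is to bound $\Proba{\alpha_{R}(u)\le \ell}$ from above for $\ell$ small by exhibiting, inside the event $\{\alpha_R(u)\le\ell\}$, a near-decreasing run of the values $Y$ (or more precisely of the $X$-values along the ray $u,u+e_1,\dots,u+Re_1$) and then invoking Lemma \ref{a4:lemma:2_1}. By stationarity of $\alpha$ (hence of $\alpha_R$) it suffices to treat $u=0$, and by the product structure over the lines $\Z\times\{i\}$ we may as well work on the line and write $Y_i=X_{(i,0)}$. The key elementary observation is the chain of implications
$$\alpha_R(0)\le \ell \;\Longleftrightarrow\; \forall r\in\llbracket 1,R\rrbracket,\ Y_r\le Y_0+r\ell \;\Longrightarrow\; \forall 1\le i<j\le R,\ Y_j\le Y_i+(j-i)\ell + \text{(something small)}.$$
More carefully: $\alpha_R(0)\le\ell$ does \emph{not} by itself force $Y_j \le Y_i+(j-i)\ell$ for $i\ge 1$. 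But we can recover a monotone run as follows. Given the target exponent $\eta$, apply Lemma \ref{a4:lemma:2_1} with some $\varepsilon$ chosen so that $\varepsilon^{2n_0}\le\eta$, obtaining $n_0$ and $h_0$. Then set $R_0 := n_0$ and look at the block $Y_1,\dots,Y_{n_0}$. Unfortunately $\alpha_{R_0}(0)\le\ell$ only controls $Y_r$ relative to $Y_0$, so a cleaner route is to take $R_0$ a large multiple of $n_0$, say $R_0=Kn_0$, and use the pigeonhole/averaging idea: among the $R_0$ constraints $Y_r\le Y_0+r\ell$, consider the ``record lows'' of the sequence $(Y_r-r\ell)_{1\le r\le R_0}$; along a descending record sequence one automatically has, for consecutive records $i<j$, the inequality $Y_j-j\ell \le Y_i-i\ell$, i.e. $Y_j \le Y_i + (j-i)\ell$, which is exactly $Y_i >_{(j-i)\ell} Y_j$ — but the gaps $j-i$ may be larger than $1$, so this is not literally the event of Lemma \ref{a4:lemma:2_1} with the \emph{same} $h$.

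The honest fix, and the step I expect to be the main obstacle, is to pass from ``a few widely spaced near-decreasing values'' to ``$n_0$ consecutively near-decreasing values''. Here is the mechanism I would use. Work on the truncated line $Y_0,Y_1,\dots,Y_{R_0}$ with $R_0$ to be fixed. Since $\alpha_{R_0}(0)\le\ell$, all the points $(r,Y_r)$ for $1\le r\le R_0$ lie weakly below the line $y = Y_0 + \ell x$ through $(0,Y_0)$. Now iterate the definition: consider the first index $r_1\in\llbracket 1,R_0\rrbracket$; the constraint $\alpha_{R_0}(0)\le\ell$ does not control $\alpha$ at $r_1$, but a \emph{stronger} event inside $\{\alpha_{R_0}(0)\le\ell\}$ is the event that additionally $\alpha_{R_0}$ is small at several points of the block — and this is not forced. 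So instead I would \emph{not} try to force a consecutive run directly; rather, I would observe that $\{\alpha_{R_0}(0)\le \ell\}$ is contained in the event that the \emph{running maximum} $M_r:=\max_{0\le s\le r}(Y_s - s\ell)$ equals $Y_0$ for all $r\le R_0$, equivalently that $Y_0$ is the maximum of $Y_0,Y_1-\ell,\dots,Y_{R_0}-R_0\ell$. Up to a shift this is the event that $0$ is the argmax of $R_0+1$ i.i.d.-with-density variables perturbed by a deterministic linear drift; by the density assumption (Assumption \ref{a4:a:a2}) and a continuity-in-$\ell$ argument identical to \eqref{a4:eq:key_factorial_2}, this probability converges as $\ell\to 0$ to $\tfrac{1}{R_0+1}$ (the probability that the first of $R_0+1$ exchangeable variables is the largest). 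Choosing $R_0$ with $\tfrac{1}{R_0+1}\le \tfrac{\eta}{2}$ and then $\ell_0$ small enough that the probability is $\le\eta$ for all $\ell\in[0,\ell_0]$ gives the claim — and this argument doesn't even need Lemma \ref{a4:lemma:2_1}, only the same two ingredients (exchangeability of i.i.d.\ variables with a density, plus continuity in the drift parameter).

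So the steps, in order: (1) reduce to $u=0$ on the line by stationarity and the product-over-lines structure; (2) rewrite $\{\alpha_{R_0}(0)\le\ell\}$ as $\{Y_0 = \max_{0\le r\le R_0}(Y_r - r\ell)\}$; (3) for fixed $R_0$, show $\ell\mapsto \Proba{Y_0=\max_{0\le r\le R_0}(Y_r-r\ell)}$ is continuous at $\ell=0$, writing it as an integral against $\prod\varphi$ and using dominated convergence (Assumption \ref{a4:a:a2}); (4) evaluate the $\ell=0$ limit: by exchangeability of the i.i.d.\ block $(Y_0,\dots,Y_{R_0})$ and a.s.\ absence of ties, this limit is $\tfrac{1}{R_0+1}$; (5) given $\eta$, pick $R_0$ so that $\tfrac{1}{R_0+1}<\tfrac{\eta}{2}$, then pick $\ell_0>0$ so that the probability stays below $\eta$ on $[0,\ell_0]$, uniformly in $u$ by stationarity. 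The main obstacle is really just step (3)–(4): making the continuity argument clean (the integrand is a product of densities times an indicator of a region whose boundary moves continuously with $\ell$, so the boundary has Lebesgue measure zero for a.e.\ $\ell$ and dominated convergence applies), and correctly identifying the limiting constant via exchangeability rather than accidentally getting $\tfrac{1}{R_0!}$ by confusing ``$0$ is the argmax'' with ``the whole block is monotone''.
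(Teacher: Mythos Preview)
Your final plan (steps (1)--(5)) is correct and is exactly the paper's proof: reduce to $u=0$ by stationarity, identify $\{\alpha_{R_0}(0)\le 0\}$ with the event that $Y_0$ is the maximum of $Y_0,\dots,Y_{R_0}$ so that its probability is $\tfrac{1}{R_0+1}$, choose $R_0$ with $\tfrac{1}{R_0+1}\le\tfrac{\eta}{2}$, and then use continuity in $\ell$ (via the explicit formula $\int_\R \varphi(x)\prod_{r=1}^{R_0}F(x+r\ell)\,dx$ and dominated convergence) to find $\ell_0$. The long detour through Lemma~\ref{a4:lemma:2_1} and record sequences is unnecessary, as you yourself concluded; the paper goes straight to the argmax/exchangeability argument.
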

\begin{proof}
    By stationarity, it is enough to prove the result for $u_0=(0,0)$.
    First, note that for any given $R_0\geq 1$ we have
    \begin{equation}
        \label{a4:eq:factorial_3}
         \Proba{\alpha_{R_0}(u_0)\leq 0}=\frac{1}{R_0+1}.
    \end{equation}
    Indeed, for $\alpha_{R_0}(u_0)$ to be negative, one need to have $X_{u_0}$ to be greater than any of the $X_{u_0+re_1}$ for $1\leq r\leq R$. So the above probability, is the probability that $X_{u_0}=\max_{0\leq r\leq R}{X_{u_0+re_1}}.$ Since the $(X_v)_{v\in \Z^2}$ are independent and with density, this probability is precisely $\frac{1}{R_0+1}$ (there is the same probability for any point in this finite sequence to be the maximum of the sequence).
    
    Due to \eqref{a4:eq:factorial_3}, we may find $R_0\geq 1$ depending on $\eta$ such that
    \begin{equation}
        \label{a4:eq:factorial_4}
        \Proba{\alpha_{R_0}(u_0)\leq 0}\leq \frac{\eta}{2}.
    \end{equation}
    Now, we argue that the map $\ell\mapsto \Proba{\alpha_{R_0}(u_0)\leq \ell}$ is continuous. In fact one may write
    \begin{align*}
        \Proba{\alpha_{R_0}(u_0)\leq \ell}&=\Proba{\forall r\in \llbracket 1, R_0\rrbracket,\  X_{u+re_1}< X_{u_0}+r\ell} \\
        &=\int_{\R}\varphi(x)\prod_{r=1}^{R_0}F(x+r\ell) dx,
    \end{align*}
    where $\varphi\in L^1(\R)$ is the density of $X_{u_0}$ and $F$ is the cumulative distribution function associated to this density ($F$ is continuous, positive and bounded by $1$). The dominated convergence theorem ensures continuity in $\ell$.
    This continuity together with \eqref{a4:eq:factorial_4} conclude the proof.
\end{proof}

\begin{definition}
    Let $A\subset \Z$ be a finite subset, $A=\{a_1,a_2,\dots,a_n\}$ with $a_1<a_2<\dots<a_n$.
    Let $R_0\geq 1$. We say that $A$ is \textit{$R_0$-connected} if
    \begin{equation}
        \forall i\in \llbracket 1,n-1\rrbracket,\ |a_{i+1}-a_i|\leq R_0.
    \end{equation}
\end{definition}
\begin{lemma}
    \label{a4:lemma:decomp_R0_connected}
    Let $A\subset \Z$ be a finite subset. Let $R_0\geq 1$. There exist $k\in \N$ and subsets $A_1,\dots,A_k$ of $A$ such that
    \begin{enumerate}
        \item $A_i$ if $R_0$-connected for any $i\in \llbracket 1, k\rrbracket.$
        \item $A=\sqcup_{i=1}^k A_i$
        \item $\forall 1\leq i\leq k-1, \max A_i + R_0 < \min A_{i+1}.$
    \end{enumerate}
    Such subsets of $A$ form what we call a \emph{$R_0-$decomposition} of $A$.
\end{lemma}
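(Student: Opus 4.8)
The plan is to cut $A$ along its \textbf{large gaps}. Order $A$ as $a_1<a_2<\dots<a_n$ and look at the consecutive differences $a_{i+1}-a_i$ for $i\in\llbracket 1,n-1\rrbracket$. Call $i$ a \emph{cut index} if $a_{i+1}-a_i>R_0$. The cut indices, together with the conventional endpoints $0$ and $n$, split $\llbracket 1,n\rrbracket$ into consecutive blocks of indices, and I define $A_1,\dots,A_k$ to be the corresponding blocks of elements of $A$ (listed in increasing order of their indices). If $A=\emptyset$ we take $k=0$; if $|A|=1$ there is no gap to consider and $k=1$.

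Once the construction is written down, the three properties are immediate. Property 2 holds because each $a_i$ lies in exactly one index block. Property 1 holds because inside a single block there is no cut index, so every consecutive difference of elements of a fixed $A_i$ is at most $R_0$, which is exactly the definition of $R_0$-connectedness. Property 3 holds because if $A_i$ and $A_{i+1}$ are consecutive blocks, then writing $a_j=\max A_i$ and $a_{j+1}=\min A_{i+1}$, the index $j$ is by construction a cut index, so $\min A_{i+1}-\max A_i=a_{j+1}-a_j>R_0$.

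Alternatively, and perhaps more cleanly for a written-out proof, I would argue by induction on $n=|A|$. The base cases $n=0,1$ are trivial. For the inductive step: if every consecutive difference is $\le R_0$, then $A$ itself is $R_0$-connected and we take $k=1$, $A_1=A$. Otherwise let $j$ be the largest index with $a_{j+1}-a_j>R_0$; by the choice of $j$ the set $\{a_{j+1},\dots,a_n\}$ is $R_0$-connected, and we apply the induction hypothesis to $\{a_1,\dots,a_j\}$ to obtain an $R_0$-decomposition $A_1,\dots,A_{k-1}$, then set $A_k=\{a_{j+1},\dots,a_n\}$. Properties 1 and 2 are clear; property 3 among $A_1,\dots,A_{k-1}$ comes from the induction hypothesis, and between $A_{k-1}$ and $A_k$ it holds because $\max A_{k-1}=a_j$ and $a_{j+1}-a_j>R_0$.

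There is no real obstacle here: this is a purely elementary partitioning statement, and the only thing to be careful about is the bookkeeping of block indices (or, in the inductive version, verifying that $\max A_{k-1}$ is indeed $a_j$, which is immediate since the last block of any $R_0$-decomposition of $\{a_1,\dots,a_j\}$ contains its largest element $a_j$). I would simply make sure the degenerate cases $A=\emptyset$ and $|A|=1$ are mentioned so that the statement is literally correct for all finite $A$.
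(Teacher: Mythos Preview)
Your proposal is correct, and your inductive version is essentially the paper's own argument: the paper also proceeds by induction on $|A|$, the only cosmetic difference being that it adds the largest element $a_{n+1}$ one at a time (appending it to the last block or creating a new singleton block according to whether $a_{n+1}-a_n\le R_0$), whereas you peel off the entire last block at once. Your direct ``cut at the large gaps'' description is just the unwinding of this induction and is equally valid.
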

\begin{proof}
We do the proof by induction on the cardinal of $A$.
If $A=\emptyset$, we may take $k=0$ and there is nothing to check. Assume the property holds for sets of cardinal $n$. Let us consider $A$ a set of cardinal $n+1$ which we may write as $A=\{a_1,\dots,a_{n+1}\}$ with $a_1<a_2<\dots<a_{n+1}.$
Then consider $\tilde{A}=A\setminus \{a_{n+1}\}$. We may find a $R_0$-decomposition of $\tilde{A}$ which we denote by $(\tilde{A}_i)_{1\leq i \leq k}.$ If $a_{n+1}-a_n \leq R_0$ we may add $a_{n+1}$ to $\tilde{A}_k$, and leave the other $\tilde{A}_i$ unchanged, which gives an $R_0$-decomposition of $A$. Otherwise we may define $\tilde{A}_{k+1}:=\{a_{n+1}\}$. Then $(\tilde{A}_i)_{1\leq i \leq k+1}$ is a $R_0$-decomposition of $A$.
\end{proof}
We now prove Proposition \ref{a4:prop:regime_ll_1}.
\begin{proof}[Proof of Proposition \ref{a4:prop:regime_ll_1}]
    Let $\varepsilon\in ]0,1[$. Applying Lemma \ref{a4:lemma:2_1} we let $h_0>0$ and $n_0\geq 1$ (depending on $\varepsilon$) be such that :
    \begin{equation}
        \label{a4:eq:f1}
        \forall 0\leq h\leq h_0,\ \mathbb{P}(Y_1>_h Y_2>\dots >_h Y_{n_0})\leq \varepsilon^{2n_0}.
    \end{equation}
    We apply Lemma \ref{a4:lemma:2_2} for $\eta = \varepsilon^{2n_0}$. We may therefore define $\ell_0>0$ and $R_0\geq 1$ (depending on $\varepsilon$) be such that
    \begin{equation}
        \label{a4:eq:f2}
        \forall u\in \Z^2,\  \forall \ell\in [0, \ell_0],\ \Proba{\alpha_{R_0}(u)<\ell}\leq \varepsilon^{2n_0}.
    \end{equation}
    In the following of the proof we fix some $\ell>0$ (depending on $\varepsilon$) such that $\ell\leq \ell_0$ and $\ell\leq \frac{h_0}{R_0}$.
    
    Let $A$ be a finite subset of $\Z$ (which we interpret as a subset of $\Z\times \{0\}$ by the natural inclusion $i\mapsto (i,0)$). As mentioned earlier, by the definition of $\alpha$ and $\alpha_{R_0}$, we always have the implication
    $$\alpha(i)<\ell \Rightarrow \alpha_{R_0}(i)<\ell.$$
    Therefore, we obtain the following upper-bound
    \begin{equation}
        \label{a4:eq:alpha_to_alpha_R_0}
        \mathbb{P}(\forall i \in A,\ \alpha(i)<\ell)\leq \mathbb{P}(\forall i \in A,\ \alpha_{R_0}(i)<\ell).
    \end{equation}
    By Lemma \ref{a4:lemma:decomp_R0_connected}, we may write
    $$A = A_1 \sqcup A_2\sqcup \dots \sqcup A_r,$$
    where $(A_k)_{1\leq k \leq r}$ is a $R_0$-decomposition of $A$.
    We can then write
    \begin{align}
        \label{a4:eq:product_R_0}
        \mathbb{P}(\forall i \in A,\  \alpha_{R_0}(i)<\ell)  &= \mathbb{P}(\forall k\in \llbracket 1,r \rrbracket,\  \forall i\in A_k,\ \alpha_{R_0}(i)<\ell)\nonumber\\
        &= \prod_{k=1}^r \mathbb{P}(\forall i\in A_k,\ \alpha_{R_0}(i)<\ell).
    \end{align}
    Indeed, by the definition of a $R_0$-decomposition, the sets $A_k$ are distant of at least $R_0$, and since $\alpha_{R_0}(i)$ only depends on the collection $(Y_j)_{i\leq j\leq i+R_0}$ we see that the restrictions of $\alpha_{R_0}$ to the different $A_k$ are independent.

    We may therefore focus on the case of a set $A\subset \Z$ which is finite and $R_0$-connected. Let $A$ be such a set. We want to prove the following.
    \begin{equation}
        \mathbb{P}(\forall i\in A,\ \alpha_{R_0}(i)<\ell) \leq \varepsilon^{|A|}.
    \end{equation}
    Let us write $A=\{a_1,\dots,a_n\}$ with $a_1<a_2<\dots<a_n$ and where $n=|A|\geq 1$ (in the case $n=0$ there is nothing to prove).
    We distinguish two cases.
    If $n\leq 2n_0$, we may take some $i_0\in A$ (arbitrarily chosen) and write
    \begin{equation}
        \mathbb{P}(\forall i\in A,\ \alpha_{R_0}(i)<\ell) \leq \mathbb{P}(\alpha_{R_0}(i_0)<\ell) \leq \varepsilon^{2n_0}\leq \varepsilon^n.
    \end{equation}
    We now consider the case where $n>2n_0$. First, observe that since $A$ is $R_0$-compact, then
    $$\alpha_{R_0}(a_1)<\ell \Rightarrow Y_{a_1}+\ell(a_2-a_1)> Y_{a_2}.$$
    Moreover, since we assume $\ell\geq 0$ and $a_2-a_1\leq R_0$, and $R_0\ell\leq h_0$ we have
    $$\alpha_{R_0}(a_1)<\ell \Rightarrow Y_{a_1} >_{h_0} Y_{a_2},$$
    where we recall that $Y_{a_1}>_{h_0}Y_{a_2}$ stands for $Y_{a_1}+h_0>Y_{a_2}.$
    We may therefore write
    \begin{align*}
        \mathbb{P}(\forall i\in A,\ \alpha_{R_0}(i)<\ell) & \leq \mathbb{P}(Y_{a_1}>_{h_0}Y_{a_2}>_{h_0} \dots >_{h_0} Y_{a_n}).
    \end{align*}
    This probability is the same as
    \begin{equation}
        \Proba{\forall i\in \llbracket 1, n-1\rrbracket,\ Y_i>_{h_0}Y_{i+1}}.
    \end{equation}
    We will do an upper-bound on  this probability by forgetting one inequality every $n_0$ and then use independence. More precisely
    \begin{align*}
       &\Proba{\forall i\in \llbracket 1, n\rrbracket,\ Y_i>_{h_0}Y_{i+1}} \\ \leq \quad &
       \Proba{\forall j\in \llbracket 1, \lfloor \frac{n}{n_0}\rfloor \rrbracket,\ \forall i \in \llbracket (j-1)n_0+1, jn_0-1\rrbracket,\ Y_i>_{h_0} Y_{i+1}} \\
         \leq\quad & \mathbb{P}(Y_1>_{h_0}Y_2>_{h_0} \dots >_{h_0} Y_{n_0})^{\lfloor \frac{n}{n_0}\rfloor} \\
        \leq \quad&\varepsilon^{2n_0 \lfloor \frac{n}{n_0}\rfloor} \\ \leq \quad & \varepsilon^{2n - 2n_0} \\
        \leq \quad &  \varepsilon^n.
    \end{align*}
    To go from the second to the third line, we use independence and the fact that by stationarity all probabilities are the same. From the third to fourth line we use \eqref{a4:eq:f1}, in the last two lines we use that $\varepsilon\in ]0,1[$, $\lfloor x \rfloor \geq x-1$ and our assumption $n> 2n_0$.
    
    By \eqref{a4:eq:alpha_to_alpha_R_0}, we have thus proven Proposition \ref{a4:prop:regime_ll_1} when $A\subset \Z\times\{0\}$ is finite and $R_0$-connected. By \eqref{a4:eq:product_R_0} and \eqref{a4:eq:alpha_to_alpha_R_0}, Proposition \ref{a4:prop:regime_ll_1} actually holds for any $A\subset \Z\times \{0\}$ a finite subset. To deduce the result for any $A\subset \Z^2$ finite, we may proceed as in the proof of Proposition \ref{a4:prop:regime_gg_1} writing $A=\sqcup_{i\in \Z} A\cap (\Z\times \{i\})$ and using the fact that the restrictions of $\alpha$ to $\Z\times \{i\}$ are independent and identically distributed when $i$ ranges over $\Z$. This concludes the proof of Proposition \ref{a4:prop:regime_ll_1}. 
\end{proof}

\appendix
\section{Recovering $X$ from $\alpha$}
\label{a4:sec:appendixA}
In this appendix we consider the problem of understanding how much information about $X$ does the field $\alpha$ really contain. Indeed we have seen the problem in the following way, we start from a field $X : \Z^2 \to \R$ and we compute a new field $\alpha : \Z^2 \to \overline{\R}$ by the formula
\begin{equation}
    \forall u\in \Z^2,\ \alpha(u) := \sup_{r\geq 1}\frac{X(u+re_1)-X(u)}{r}.
\end{equation}
That is, we may write $\alpha = \Phi(X)$ where $\Phi$ is a deterministic application
\begin{equation}
    \Phi : \R^{\Z^2}\to \overline{\R}^{\Z^2}.
\end{equation}
We wonder if it is possible to build another deterministic application $\Psi : \overline{\R}^{\Z^2} \to \R^{\Z^2}$ such that $\Psi(\Phi(X))=X.$
We will work under the following assumption.
\begin{assumption}
\label{a4:a:a4}
The measure $\mu$ admits a finite first moment, that is
\begin{equation}
    \int_\R |x|\mu(dx)<\infty.
\end{equation}
\end{assumption}
Our main result is the following.
\begin{theorem}
\label{a4:thm:recovery}
There exists a deterministic application $\Psi : \overline{\R}^{\Z^2} \to \R^{\Z^2}$ such that the following holds. Let $X: \Z^2 \to \R$ and $\mu$ be such that Assumptions \ref{a4:a:a1} and \ref{a4:a:a4} hold, then
\begin{equation}
    \Proba{\Psi(\Phi(X))=X-\overline{\mu}\mathds{1}}=1,
\end{equation}
where $\overline{\mu}\in \R$ denotes the first moment of $\mu$ and $\mathds{1}\in \R^{\Z^2}$ has all coordinates equal to $1$.
\end{theorem}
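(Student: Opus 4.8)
The plan is to reduce the problem to one dimension and then to exhibit an explicit inversion procedure. Since the restrictions of $\alpha$ to the horizontal lines $\Z\times\{k\}$ are independent and each is a deterministic function of the restriction of $X$ to the same line, it suffices to produce, for one line, a deterministic map recovering $(X(i,0))_{i\in\Z}$ up to an additive constant from $(\alpha(i,0))_{i\in\Z}$, and then to determine that constant. Write $Y_i=X(i,0)$, $D_i=Y_{i+1}-Y_i$ and $\alpha_i=\alpha(i,0)=\sup_{r\geq1}\frac1r(Y_{i+r}-Y_i)$, so that $\alpha_i$ is the maximal average of the increments $D_i,D_{i+1},\dots$ over a window starting at $i$.

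The heart of the argument is a structural lemma on these maximal-average windows. Finiteness of the first moment gives $Y_n/n\to0$ almost surely (Borel--Cantelli), so almost surely $\alpha_i\geq0$ for all $i$ and, moreover, the supremum defining $\alpha_i$ is attained at some finite index; let $\tilde r_i\geq1$ be the smallest one, so $Y_{i+\tilde r_i}-Y_i=\tilde r_i\alpha_i$ while $Y_{i+r}-Y_i\leq r\alpha_i$ for all $r$. Two elementary facts, each obtained by manipulating forward averages, are: (i) $\alpha_{i+\tilde r_i}\leq\alpha_i$, because any window from $i+\tilde r_i$ prepended with the window $[i,i+\tilde r_i]$ is a window from $i$; and (ii) $\alpha_k>\alpha_i$ for every $i<k<i+\tilde r_i$, because $\alpha_k\geq\frac{Y_{i+\tilde r_i}-Y_k}{i+\tilde r_i-k}$ and $Y_k-Y_i<(k-i)\alpha_i$ (the index $k-i$ is not a maximizer, being smaller than $\tilde r_i$). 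Consequently $\tilde r_i$ is \emph{intrinsic}, i.e.\ recoverable from $(\alpha_j)_j$ alone, via
\begin{equation*}
    r_i:=\min\{\,r\geq1 : \alpha_{i+r}\leq\alpha_i\,\}=\tilde r_i .
\end{equation*}
Applying (i) and (ii) inside a window shows that the windows nest: if $i<k<i+r_i$ then $k+r_k\leq i+r_i$, hence $r_k\leq r_i-1$ (otherwise $i+r_i$ would be interior to the window of $k$, forcing $\alpha_{i+r_i}>\alpha_k>\alpha_i$, contradicting (i)). This makes the recursion
\begin{equation*}
    D_i:=r_i\alpha_i-\sum_{k=i+1}^{i+r_i-1}D_k
\end{equation*}
well-founded (recursion depth at $i$ at most $r_i$), and since $\sum_{k=i}^{i+r_i-1}D_k=Y_{i+r_i}-Y_i=r_i\alpha_i$ it returns exactly $D_i=Y_{i+1}-Y_i$. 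For the recursion to be defined everywhere one needs $r_i<\infty$ for all $i$; this holds almost surely because $(\alpha_j)_j$ is stationary and ergodic (a factor of the i.i.d.\ family $(Y_j)_j$), so $r_i=\infty$ would force $\alpha_i$ to be a strict lower bound for a tail of $(\alpha_j)_j$, i.e.\ $\alpha_i$ below the essential infimum of $\alpha_0$ — impossible up to a boundary case (which forces $\mu$ to have bounded support with an atom at its supremum, and is then handled directly).

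With the lemma in hand I define $\Psi:\overline\R^{\Z^2}\to\R^{\Z^2}$ line by line: given $(\alpha_i)_{i\in\Z}$ on a line, if every $r_i$ is finite and the recursion above terminates, set $\hat Y_0=0$, $\hat Y_j=\sum_{l=0}^{j-1}D_l$ for $j>0$, $\hat Y_j=-\sum_{l=j}^{-1}D_l$ for $j<0$, and output $\hat Y_i-m$, where $m:=\lim_{N\to\infty}\frac1N\sum_{i=0}^{N-1}\hat Y_i$ when this Cesàro limit exists; otherwise output the zero function. This is a deterministic map. To verify $\Psi(\Phi(X))=X-\overline\mu\mathds{1}$ almost surely: additive constants do not affect $\alpha$, the lemma gives $\hat Y_i=Y(i,k)-Y(0,k)$ on line $k$, and the strong law of large numbers (first moment finite) gives $\frac1N\sum_{i=0}^{N-1}Y(i,k)\to\overline\mu$, so $m=\overline\mu-Y(0,k)$ and $\hat Y_i-m=Y(i,k)-\overline\mu=X(i,k)-\overline\mu$ for all $i,k$.

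The step I expect to be the real obstacle is the structural lemma: proving that the intrinsic quantity $r_i=\min\{r:\alpha_{i+r}\leq\alpha_i\}$ coincides with the maximizing-window length $\tilde r_i$, and that these windows nest ($k+r_k\leq i+r_i$ for interior $k$), which is precisely what makes the inversion recursion both computable from $\alpha$ and well-founded; the accompanying almost-sure statements ($Y_n/n\to0$, finiteness of all $r_i$, and the harmlessness of atoms of $\mu$) also require some care. The remaining ingredients — reduction to lines, the law of large numbers fixing the constant, and the assembly of $\Psi$ — are routine.
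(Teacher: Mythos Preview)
Your proposal is correct and takes essentially the same approach as the paper: your structural lemma (the identity $r_i=\min\{r\geq1:\alpha_{i+r}\leq\alpha_i\}=\tilde r_i$ and the nesting $k+r_k\leq i+r_i$) is exactly Lemma~\ref{a4:lemma:technical_reconstruction}, items~2 and~3, proved via the same slope comparisons, and the law-of-large-numbers step fixing the additive constant is identical. The only surface differences are that the paper reconstructs the values $Y$ by backward-filling from a spine $x_{n+1}=T(x_n)$ rather than recursing on the increments $D_i$, and handles almost-sure attainment of the supremum by a direct measure-theoretic argument on $\mu$ rather than your ergodicity appeal (which you correctly flag as needing care in the boundary case).
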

This theorem ensures that under our assumptions it is possible to recover $X$ from $\alpha$ up to an extra additive constant. Moreover if one has access to the first moment of $\mu$ then complete recovery is almost surely possible.
We comment that it is impossible to have an inversion in a classical way (even up to a constant). In fact consider the following counter example. Consider two sequences $(x_i)_{i\in \Z}$ and $(y_i)_{i\in \Z}$ such that,
$$\forall i\in \Z,\  x_i=0 \text{ and }y_i = \begin{cases}
    0 &\text{ if }i\geq 0 \\
    1 &\text{ if }i < 0
\end{cases}.$$ Then if one define $X,Y\in \R^{\Z^2}$ by $X(i,j):=x_i$ and $Y(i,j):=y_i$ we see that $X$ and  $Y$ do not differ by an additive constant but $\Phi(X)=\Phi(Y).$
In order to prove Theorem \ref{a4:thm:recovery} we introduce some notations.

\begin{definition}
    Let $X\in \R^{\Z^2}$ and $j\in \Z^2$. For $u\in \Z$ we denote by $\alpha_j(u)$ the quantity
    $$\alpha_j(u):= \alpha(u,j)=\sup_{r\geq 1}\frac{X(u+r,j)-X(u,j)}{r}.$$
    
    Let $u,v\in \Z$ be such that $u<v$, we denote by $\tau^X_j(u,v)$ the quantity
    \begin{equation}
        \tau^X_j(u,v):= \frac{X(v,j)-X(u,j)}{v-u}.
    \end{equation}
\end{definition}

\begin{claim}
\label{claim:slope}
Let $X\in \R^{\Z^2}$, $j\in \Z$ and $u,v,w$ be three points of $\Z$ such that $u<v<w$, then
\begin{equation}
    (\tau^X_j(u,w)-\tau^X_j(u,v))(\tau^X_j(u,w)-\tau^X_j(v,w))\leq 0.
\end{equation}
Moreover, this product is zero only if both factors are equal to zero.
\end{claim}
\begin{proof}
    The proof is a direct consequence of the fact that $\tau_j^X(u,w)$ can be written as a mean between $\tau_j^X(u,v)$ and $\tau_j^X(v,w).$
    Indeed,
    $$\tau_j^X(u,w) = \frac{w-v}{w-u}\tau_j^X(v,w)+\frac{v-u}{w-u}\tau_j^X(u,v).$$
\end{proof}

We now prove a technical lemma.
\begin{lemma}
\label{a4:lemma:technical_reconstruction}
Assume that $X : \Z^2 \to \R$ and $\mu$ are such that Assumptions \ref{a4:a:a1} and \ref{a4:a:a4} hold. Let $j\in \Z$. Then, almost surely, the following statements are verified,
\begin{enumerate}
    \item For all $u\in \Z$ the set $M(u):=\left\{v\in \Z\ |\ v>u \text{ and }\alpha_j(u) = \tau^X_j(u,v)\right\}$ is non empty and totally ordered. We denote by $T(u)\in \Z$ its minimum.
    \item For all $u\in \Z$, we have the following identity, $$T(u)=\min\{v\in \Z\ |\ v>u\text{ and }\alpha_j(v)\leq \alpha_j(u)\}.$$
    \item For any $u\in \Z$ and $v\in \Z$ such that $u<v<T(u)$ then $T(v)\leq T(u).$
\end{enumerate}
\end{lemma}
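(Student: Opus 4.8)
The plan is to observe that all the probabilistic content of the lemma lies in item (1): once item (1) is known, items (2) and (3) follow deterministically from it together with Fact \ref{claim:slope}, pointwise on the almost sure event produced in item (1). So the bulk of the work is to show that, almost surely, for every $u\in\Z$ the supremum defining $\alpha_j(u)$ is finite and attained.

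For item (1) I would first record two almost sure facts. Since $\mu$ has a finite first moment (Assumption \ref{a4:a:a4}), for every $\varepsilon>0$ we have $\sum_{r\geq 1}\mathbb{P}(|X(r,j)|\geq \varepsilon r)\leq \varepsilon^{-1}\int_{\R}|x|\,\mu(dx)<\infty$, so by Borel--Cantelli and independence, almost surely $X(m,j)/m\to 0$ as $m\to\infty$; consequently, for every $u\in\Z$, $\frac{X(u+r,j)-X(u,j)}{r}\to 0$ as $r\to\infty$. In particular this sequence is bounded, so $\alpha_j(u)<\infty$, and $\alpha_j(u)\geq \limsup_r\frac{X(u+r,j)-X(u,j)}{r}=0$. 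The second almost sure fact is that, for every $u$, $X(u,j)$ is not a strict upper bound of $\{X(u+r,j):r\geq 1\}$: conditioning on $X(u,j)=x$, the probability of this would be $\lim_{n}\mu((-\infty,x))^n$, which is $0$ for $\mu$-almost every $x$ since $\{x:\mu((-\infty,x))=1\}$ is $\mu$-null. Now fix $u$ on the intersection of these two almost sure events. If $\alpha_j(u)>0$, only finitely many terms $\frac{X(u+r,j)-X(u,j)}{r}$ exceed $\alpha_j(u)/2$, so the supremum is a maximum over a finite set, hence attained. If $\alpha_j(u)=0$, then all terms are $\leq 0$, and since they are not all $<0$ some term equals $0$, so again the supremum is attained. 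In both cases $M(u)\neq\emptyset$; being a non-empty subset of $\Z$ bounded below by $u+1$, it is totally ordered and has a minimum, which we call $T(u)$.

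For items (2) and (3) I would work on the almost sure event just constructed and use the identity from the proof of Fact \ref{claim:slope}: for $a<b<c$, $\tau^X_j(a,c)$ is a convex combination of $\tau^X_j(a,b)$ and $\tau^X_j(b,c)$ with positive weights. For item (2): if $\alpha_j(T(u))>\alpha_j(u)$, pick $w>T(u)$ with $\tau^X_j(T(u),w)>\alpha_j(u)$; then $\tau^X_j(u,w)$, being a convex combination of $\tau^X_j(u,T(u))=\alpha_j(u)$ and $\tau^X_j(T(u),w)>\alpha_j(u)$, exceeds $\alpha_j(u)$, contradicting $\alpha_j(u)=\sup_{v>u}\tau^X_j(u,v)$; hence $\alpha_j(T(u))\leq\alpha_j(u)$, so $T(u)\in S(u):=\{v>u:\alpha_j(v)\leq\alpha_j(u)\}$. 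Conversely, if $u<v<T(u)$ then $v\notin M(u)$ (all elements of $M(u)$ are $\geq T(u)$), so $\tau^X_j(u,v)<\alpha_j(u)$; writing $\alpha_j(u)=\tau^X_j(u,T(u))$ as a convex combination of $\tau^X_j(u,v)<\alpha_j(u)$ and $\tau^X_j(v,T(u))$ forces $\tau^X_j(v,T(u))>\alpha_j(u)$, so $\alpha_j(v)\geq \tau^X_j(v,T(u))>\alpha_j(u)$ and $v\notin S(u)$. Therefore $T(u)=\min S(u)$, which is item (2). For item (3), let $u<v<T(u)$; the argument just given shows $\alpha_j(v)>\alpha_j(u)$, and item (2) applied to $u$ gives $\alpha_j(T(u))\leq\alpha_j(u)<\alpha_j(v)$ with $T(u)>v$, so $T(u)\in\{w>v:\alpha_j(w)\leq\alpha_j(v)\}$; by the characterization of $T(v)$ from item (2), $T(v)=\min\{w>v:\alpha_j(w)\leq\alpha_j(v)\}\leq T(u)$.

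The step I expect to require the most care is the attainment of the supremum in item (1) in the degenerate case $\alpha_j(u)=0$, where one must rule out $X(u,j)$ strictly dominating the entire forward sequence $(X(u+r,j))_{r\geq 1}$; everything else reduces to the Borel--Cantelli tail estimate and to the elementary convexity identity already contained in the proof of Fact \ref{claim:slope}.
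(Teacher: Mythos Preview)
Your proof is correct and follows essentially the same approach as the paper: Borel--Cantelli plus the finite first moment to show the supremum is attained, the $\mu$-null set $\{x:\mu((-\infty,x))=1\}$ to handle the degenerate case $\alpha_j(u)=0$, and the convex-combination identity from Fact~\ref{claim:slope} for the deterministic items (2) and (3). The only notable difference is that your argument for item (3) is slightly more direct than the paper's---you use the characterization $T(v)=\min\{w>v:\alpha_j(w)\le\alpha_j(v)\}$ from item (2) to conclude immediately that $T(v)\le T(u)$, whereas the paper argues by contradiction via an extra application of Fact~\ref{claim:slope}.
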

\begin{proof}
    We start with the first point of the lemma. Let $u\in \Z$. Let $N\in \N$. For $r\in \N^*$ we have by stationarity
    \begin{align*}
            \Proba{\frac{X(u+r,j)-X(u,j)}{r}\geq 2^{-N}} \leq 2\Proba{2^{N+1}|X(0,0)|\geq r}
    \end{align*}
    Since $\mathbb{E}[|X(0,0)|]<\infty$, then $\sum_{r\in \N^*}\Proba{2^{N+1}|X(0,0)|\geq r}<\infty$ and Borel-Cantelli Lemma implies that almost surely there exists a finite $r_N$ depending on $N$ such that for all $r\geq r_N$, $\frac{X(u+r,j)-X(u,j)}{r}< 2^{-N}$. Since there are countably many $N\in \N$ we may assume that almost surely all $r_N$ are finite.
    
    Now denote by $\mathcal{A}_u$ the event
    $$\mathcal{A}_u := \{\exists r\in \N\ |\ X(u+r,j)>X(u,j)\}.$$
    On the event $\mathcal{A}_u$, let $r_0\in \N$ be such that $X(u+r_0,j)>X(u,j)$. We denote by $\varepsilon_0 := \frac{X(u+r_0,j)-X(u,j)}{r_0}>0.$ Let $N_0\in \N$ be such that $2^{-N_0}<\varepsilon_0$ then by our previous observation we see that eventually for $r\geq r_{N_0}$ we have $\frac{X(u+r,j)-X(u,j)}{r}<\frac{X(u+r_0,j)-X(u,j)}{r_0}$, this proves that $$\alpha_j(u) = \max_{1\leq r \leq r_{N_0}}\frac{X(u+r,j)-X(u,j)}{r}.$$
    which in turns proves the first statement of the lemma on the event $\mathcal{A}_u$.
    On the event $\mathcal{A}_u^c$, then for all $r\in \N$ we have $X(u+r,j)\leq X(u,j)$, this implies $\alpha_j(u)\leq 0.$ We now show that almost surely there exists $r\geq 1$ such that $X(u+r,j)\geq X(u,j)$ which proves $0=\alpha_j(u)=\frac{X(u+r,j)-X(u,j)}{r}$ and concludes the proof of the first element of the lemma. First, we write
    \begin{align*}
        \Proba{\forall r\in \N^*,\ X(u+r,j)<X(u,j)}=\int_{\R}\prod_{r=1}^\infty\mu(]-\infty,x[)\mu(dx).
    \end{align*}
    This comes from the fact that by Assumption \ref{a4:a:a1}, all $X(v)$ are independent and distributed according to the same probability measure $\mu.$ Denote by $I$ the set
    $$I := \{x\in \R\ |\ \mu(]-\infty,x[)=1\}.$$
    It turns out that $I$ is a measurable set. In fact, either $I=\emptyset$ or $I$ is an interval of the form $]a,\infty[$ or $[a,\infty[.$ The above computation can be written as
    \begin{align*}
        \Proba{\forall r\in \N^*,\ X(u+r,j)<X(u,j)}=\mu(I).
    \end{align*}
    If $I=\emptyset$, then the probability is $0$ and we are done. Otherwise, let $a=\inf I\in \R$, let $(a_n)_n$ be a decreasing sequence of element in $I$ such that $a_n \to a.$ We have
    $$\mu(]-\infty,a])=\mu(\bigcap_{n\in \N}]-\infty,a_n[)=1.$$
    There are two cases. If $\mu(\{a\})>0$, then $\mu(]-\infty,a[)<1$ so that $a\not\in I$, and $I=]a,\infty[$. Therefore $\mu(I)=\mu(]a,\infty[)=1-\mu(]-\infty,a])=0.$ Otherwise, if $\mu(\{a\})=0$, then $I=[a,\infty[$ and $\mu(I)=1-\mu(]-\infty,a])+\mu(\{a\})=0.$
    In all cases we have $\mu(I)=0$, this shows that with probability $1$ then there exists $r\in \N^*$ such that $X(u+r,j)\geq X(u)$ and this implies the conclusion as mentioned earlier.
    
    For the second item of the lemma. We first show that $\alpha_j(T(u))\leq \alpha_j(u)$. In fact, let $v>T(u)$ be such that $\alpha_j(T(u))=\tau^X_j(T(u),v).$ By definition of $\alpha_j(u)$ and $T(u)$ we have
    $$\alpha_j(u)=\tau^X_j(u,T(u))\geq \tau^X_j(u,v).$$
    Therefore, by Fact \ref{claim:slope}, we have $$\tau^X_j(T(u),v)\leq \tau^X_j(u,v).$$
    This entails $$\alpha_j(T(u))\leq \alpha_j(u).$$
    We now show that for any $v\in \Z$ such that $u<v<T(u)$ then $\alpha_j(v)>\alpha_j(u)$. Indeed, for such $v$, then by definition of $T(u)$ we have
    $$\tau^X_j(u,v)<\tau^X_j(u,T(u)).$$
    Therefore, by Fact \ref{claim:slope} we have
    $$\tau^X_j(v,T(u))>\tau^X_j(u,T(u))=\alpha_j(u).$$
    This implies $\alpha_j(v)>\alpha_j(u).$
    
    For the third element of the lemma, assume by contradiction that we have $u,v\in \Z$ such that $u<v<T(u)$ and verifying $T(v)>T(u).$
    Then by definition of $T(v)$ we have $$\tau^X_j(v,T(u))<\tau^X_j(v,T(v))=\alpha_j(v)$$
    Therefore, by Fact \ref{claim:slope} we have
    $$\tau^X_j(T(u),T(v))>\tau^X_j(v,T(v))=\alpha_j(v).$$
    This implies $\alpha_j(T(u))>\alpha_j(v)$ which is in contradiction with the second item of the lemma as we would have $\alpha_j(u)\geq \alpha_j(T(u))>\alpha_j(v)>\alpha_j(u)$.
\end{proof}

We now provide the proof of Theorem \ref{a4:thm:recovery}
\begin{proof}[Proof of Theorem \ref{a4:thm:recovery}]
    We will build a function $\Psi : \overline{\R}^{\Z^2}\to \R^{\Z^2}$ such that almost surely $\Psi(\Phi(X))=X-\overline{\mu}\mathds{1}.$ In order to do so, we first build a function $\Psi_0 : \overline{\R}^{\Z^2}\to \R^{\Z^2}$ such that almost surely, $$\Psi_0(\Phi(X))=(X(i,j)-X(0,j))_{i,j\in \Z}.$$
    Let us give some element $\alpha\in \overline{\R}^{\Z^2}.$
    We we build $Y=\Psi_0(\alpha)$ algorithmically. During the various steps of the algorithm we will need to make some assumptions on $\alpha$ (that are verified almost surely when $\alpha=\Phi(X)$). Therefore, to define $\Psi_0$ and $\Psi$ as functions on the whole domain $\overline{\R}^{\Z^2}$ we arbitrary choose an element $\dagger$ of $\R^{\Z^2}$ such that $\Psi_0(\alpha)=\Psi(\alpha)=\dagger$ if $\alpha$ is a "bad configuration" (which almost surely will not happen).
    First, if there exists $u\in \Z^2$ such that $\alpha(u)\not\in \R$ we define $\Psi_0(\alpha)=\dagger.$
    We now assume that for all $u\in \Z^2$, $\alpha(u)\in \R.$
    Let $j\in \Z$ we will first define $Y$ on $\Z\times \{j\}$. Let $u\in \Z$. As in Lemma \ref{a4:lemma:technical_reconstruction} we denote by $T(u)$ the index
    $$T(u):= \inf\{v\in \Z\ |\ v>u\text{ and }\alpha_j(v)\leq \alpha_j(u)\},$$
    with the convention $\inf \emptyset =\infty.$ If there exists $u\in \Z$ such that $T(u)=\infty$ then we set $\Psi_0(\alpha)=\dagger.$
    We now assume that all $T(u)$ are finite. This will almost surely be the case by Lemma \ref{a4:lemma:technical_reconstruction}.
    Let $(x_n)_{n\geq 0}\in \Z^\N$ the sequence defined by
    $$x_0=0 \quad \quad \forall n\in \N,\ x_{n+1} := T(x_n).$$
     We now define $\Psi_0(\alpha)$ on the collection $(x_n,j)_{n\geq 0}$ by
     $$Y(x_0,j):=0$$
     $$\forall n\in \N,\ Y(x_{n+1},j) := Y(x_{n},j)+(x_{n+1}-x_{n})\alpha_j(x_n).$$
    We proceed to extend this definition on $\N\times \{j\}.$
    Let $n\in \N$, such that $x_n+1<x_{n+1}$ we define $Y$ on $\llbracket x_n+1, x_{n+1}\rrbracket\times \{j\}$ by induction. $Y(x_{n+1},j)$ was already defined earlier. If we assume that $Y$ is defined on $\llbracket x_{n+1}-r, x_{n+1}\rrbracket\times \{j\}$ for some $0\leq r \leq x_{n+1}-x_n-2.$ Then let $x:= x_{n+1}-(r+1)\in \Z$ we define $Y(x,j)$ by the following rule.
    \begin{itemize}
        \item If $T(x)\leq x_{n+1}$ we set $Y(x,j):= Y(T(x),j)-\alpha(x)(T(x)-x).$
        \item Otherwise we set $\Psi_0(\alpha)=\dagger.$
    \end{itemize}
    By the third element of Lemma \ref{a4:lemma:technical_reconstruction} we see that if $\alpha=\Phi(X)$ then, almost surely, we will have $T(x)\leq x_{n+1}$ and therefore we will not set $\Psi_0(\alpha)=\dagger.$
    
    Once we have constructed $Y$ on $\N\times \{j\}$ by the above procedure we may proceed to define $Y$ on $\Z\times \{j\}.$ 
    We proceed by induction, assume that $Y$ is constructed on $\llbracket -r, \infty \llbracket\times \{j\}$ for some $r\geq 0$ we extend this construction to $\llbracket -(r+1),\infty\llbracket\times \{j\}$ by setting
    $$Y(-(r+1),j) := Y(T(-(r+1)),j)-\alpha_j(-(r+1))(T(-(r+1))+(r+1)).$$

    By repeating this for all $j\in \Z$ we build the process $Y$ on $\Z\times \Z.$ This process was built so that $Y(0,j)=0$ and $Y(T(u),j)-Y(u,j)=\alpha_j(u)(T(u)-u)$ for all $u,j\in \Z$. It is therefore straightforward to verify by Lemma \ref{a4:lemma:technical_reconstruction} that almost surely we have $$Y=\Psi_0(\Phi(X))=(X(i,j)-X(0,j))_{i,j\in \Z^2}.$$
    
    Once we have computed $Y=\Psi_0(\alpha)$, we define $\Psi(\alpha)$ in the following way,
    $$\forall (i,j)\in \Z^2,\ \Psi(\alpha)(i,j):=Y(i,j) - \lim_{N\to \infty}\frac{1}{N}\sum_{r=1}^N Y(r,j).$$
    As usual, if the limit above does not make sense then we set $\Psi(\alpha)=\dagger.$
    When $\alpha=\Phi(X)$ then, almost surely, we have $Y(r,j)=X(r,j)-X(0,j)$ and by the law of large numbers the limit exists,
    $$\frac{1}{N}\sum_{r=1}^N Y(r,j) \xrightarrow[n\to \infty]{} \overline{\mu}-X(0,j).$$
    This shows that almost surely
    $$\Psi(\Phi(X))=X-\overline{\mu}\mathds{1},$$
    which concludes the proof of Theorem \ref{a4:thm:recovery}.
\end{proof}

\bibliography{biblio.bib}

David, Vernotte

Institut Fourier, UMR 5582, Laboratoire de MathématiquesUniversité Grenoble Alpes, CS 40700, 38058 Grenoble cedex 9, France
\end{document}